\newtheorem{theorem}{Theorem}
\newtheorem{lemma}[theorem]{Lemma}
\newtheorem{definition}[theorem]{Definition}
\newtheorem{example}[theorem]{Example}
\newtheorem{remark}[theorem]{Remark}
\begin{document}
\title[Stochastic stability of galactic dynamo]{Stochastic stability analysis of a reduced galactic dynamo model with perturbed $\alpha$-effect}

\author{C\'onall Kelly}
\email{conall.kelly@uwimona.edu.jm}
\address{Department of Mathematics, The University of the West Indies, Mona, Kingston 7, Jamaica. }

\subjclass{37H15; 60H10; 60H30; 76E25}
\keywords{Stochastic differential equations; Lyapunov exponents; Stability and instability of magnetohydrodynamic and electrohydrodynamic flows}

\date{\today}

\begin{abstract}
We investigate the asymptotic behaviour of a reduced $\alpha\Omega$-dynamo model of magnetic field generation in spiral galaxies where fluctuation in the $\alpha$-effect results in a system with state-dependent stochastic perturbations.

By computing the upper Lyapunov exponent of the linearised model, we can identify regions of instability and stability in probability for the equilibrium of the nonlinear model; in this case the equilibrium solution corresponds to a magnetic field that has undergone catastrophic quenching. These regions are compared to regions of exponential mean-square stability and regions of sub- and super-criticality in the unperturbed linearised model. Prior analysis in the literature which focuses on these latter regions does not adequately address the corresponding transition in the nonlinear stochastic model.

Finally we provide a visual representation of the influence of drift non-normality and perturbation intensity on these regions. 

\end{abstract}


\maketitle

\section{Introduction}
Magnetic fields in large scale astrophysical objects may be generated by a turbulent dynamo, potentially driven by the $\alpha$-effect, which arises in this context when differential rotation in the underlying turbulent flow distorts magnetic field lines in such a way that the magnetic field is amplified. However, this effect is subject to quenching, and there is an ongoing investigation in the literature (see \cite{SriSin,SurSub} for a recent review) examining the question of how random fluctuation of the $\alpha$-effect and/or extrinsic random perturbation may overcome this quenching and allow for the spontaneous generation of large-scale magnetic fields. To the best of the author's knowledge, Farrell \& Ioannou~\cite{FI1999} were the first to suggest the combined role of operator non-normality and stochastic forcing in the generation of magnetic fields in stellar and galactic objects.

In this article we build upon the analysis of Fedotov et al~\cite{Fed,FBR,FBR2}, who explored the role played by the interaction of non-normality with stochastic perturbation using a reduced stochastic differential equation (SDE) $\alpha\Omega$-dynamo model based on a no-$z$ model for spiral galaxies due to Moss~\cite{Moss}. This model assumes a thin rotating turbulent disk of a conducting medium of uniform thickness, where the angular velocity varies by distance from the centre. The sensitivity of the reduced dynamo model to perturbation is illustrated as a consequence of the non-normality of the unperturbed system. A normal matrix is one that is diagonalisable by a unitary transform. Solutions of a system of first-order ordinary differential equations governed by a non-normal coefficient matrix undergo transients that are potentially large in extent and duration, even though the spectral abscissa of the coefficient may be negative. These transients are amplified by perturbation, and consequently the stability of an equilibrium solution may be vulnerable either to stochastic perturbation, or perturbation by higher-order terms omitted in the process of linearisation, even if the intensity is small. The same phenomenon is discussed in the context of test system selection for numerical methods of SDEs in \cite{BuKe11}, and some further applied examples are treated in \cite{BuKe14}.

We are interested in the instability and stability in probability of the equilibrium solution that corresponds to a magnetic field that has undergone catastrophic quenching. The goal of the article is to determine the role of multiplicative stochastic perturbation arising from random fluctuation in the $\alpha$-effect in the absence of additive noise. To that end we derive a formula for the upper Lyapunov exponent of the linearised system in terms of the model parameters and observe how the sign responds to changes in the non-normality of the drift and the intensity of the perturbation.

In \cite{FBR}, the authors use the linearisation of their model to show that the interaction of the multiplicative stochastic term (random fluctuation in the $\alpha$-effect) with the non-normality of the unperturbed system leads to a loss of mean-square asymptotic stability in the subcritical case, interpreted as the generation of a sustained magnetic field. A preliminary numerical investigation of trajectories of the nonlinear model yielded observations of a stochastically induced phase transition in the presence of additive noise. 

In \cite{FBR2} the authors continued their theoretical investigation, using adiabatic elimination to derive a stochastic differential equation that governs the behaviour of the slow variable of the nonlinear system close to the transition. The probability density function of this governing SDE underwent qualitative changes in response to an increase in intensity of the multiplicative noise term.

To see why we work with Lyapunov exponents, consider that the work in \cite{Fed,FBR,FBR2} addresses the exponential mean-square stability of the linearised equation in the drift-subcritical regime, and the behaviour of the distribution of the slowly varying component of the nonlinear system. Exponential mean-square stability is sufficient but not necessary for stability in probability of the equilibrium of the nonlinear stochastic system. Indeed, there is currently no theoretical link whatsoever between linear mean-square instability and nonlinear instability. In order to draw conclusions about the development of instability of any kind in the nonlinear model via the linearisation an understanding of pathwise behaviour beyond that provided by numerical simulation is necessary and one must compute upper Lyapunov exponents. Such an analysis of the stability of the nonlinear stochastic model has never been carried out and we do so here, comparing regions of stability provided by the upper Lyapunov exponent to those provided by exponential mean-square stability in in \cite{FBR}, and to regions of sub/super-criticality in the unperturbed model.

The structure of the article is as follows. In Section \ref{background} we provide the model equations and the appropriate definitions and results from the theory of stochastic stability. We select and describe a suitable measure of non-normality for the linearised model system, present a sharp condition for its exponential mean-square stability, and outline the theory of Lyapunov exponents for linear stochastic systems. In Section \ref{asasy} we use the methods of Imkeller \& Lederer~\cite{IL99,IL}, which build upon the work of Arnold et al~\cite{Arnold1,Arnold2,Arnold3}, to derive a formula for the upper Lyapunov exponent $\lambda$ of the linearised model, expressed in terms of gamma and generalised hypergeometric functions. In Section \ref{visualisation} we use the results presented in Sections \ref{background} and \ref{asasy} to plot regions of exponential mean-square stability along with regions of positive and negative $\lambda$: demonstrating the influence of perturbation of the $\alpha$-effect, showing how these regions change with the non-normality of the drift, and identifying regions of the parameter space where the equilibrium of the nonlinear model system is stable or unstable in probability. 

\section{Background: the model system, stochastic stability and non-normality}\label{background}

\subsection{A reduced $\alpha\Omega$-dynamo model}
We select the following reduced 2-dimensional stochastic model of an $\alpha\Omega$-dynamo, which was used by Fedotov et al~\cite{FBR,FBR2} to investigate the role of stochastic perturbation in the generation of large-scale magnetic fields in spiral galaxies. The radial ($B_r$) and azimuthal ($B_\varphi$) components of the magnetic field $\mathbf{B}=(B_r,B_\varphi)^T$ are governed by
\begin{equation}\label{eq:nonlin}
d\mathbf{B}=f_1(\mathbf{B})dt+f_2(\mathbf{B})dW_1(t),\quad t\geq 0,
\end{equation}
where $W_1$ is a Wiener process and the drift and diffusion coefficients $f_1$ and $f_2$ are
\[
f_1(B_r,B_{\varphi})=\begin{pmatrix}-(\delta\varphi_\alpha(B_\varphi)B_\varphi+\varepsilon\varphi_\beta(B_\varphi)B_r)\\
 -(gB_r+\varepsilon\varphi_\beta(B_\varphi)B_\varphi)
\end{pmatrix};\quad f_2(B_r,B_\varphi)=\begin{pmatrix}-\sqrt{2\sigma_1}\varphi_\alpha(B_\varphi)B_\varphi\\ 0\end{pmatrix}.
\]
Here, $g,\delta,\varepsilon\in\mathbb{R}^+$ are dimensionless parameters with typical values $g\approx 1$, $\delta,\varepsilon\in[0.01,0.1]$, and $\sigma_{1}>0$. $\varphi_\alpha(\cdot)$ and $\varphi_\beta(\cdot)$ are nonlinear quenching functions associated with $\alpha$ and $\beta$ respectively:
\[
\varphi_\alpha(B_\varphi)=\frac{1}{1+k_\alpha B_\varphi^2};\qquad\varphi_\beta(B_\varphi)=\frac{1+B_\varphi^2}{1+(k_\beta+1)B_\varphi^2},
\]
where $k_\alpha,k_\beta$ are constants with order of magnitude $1$. Terms parameterised by $\alpha$ correspond to the contribution of the $\alpha$-effect on the field, and those parameterised by $\beta$ refer to the contribution of turbulent magnetic diffusivity. The relationship between the reduced model \eqref{eq:nonlin} and the full model is described and motivated in \cite{FBR}. Note that we omit additive noise from the model in order to focus on the influence of perturbations arising from stochastic fluctuation in the $\alpha$-effect.

The steady-state solutions of the unperturbed nonlinear system given by \eqref{eq:nonlin} with $\sigma_1=0$ satisfy
\begin{subequations}\label{eq:eq}
\begin{eqnarray}
B_\varphi&=&-\delta\varphi_\alpha(B_\varphi)B_\varphi+\frac{\varepsilon^2}{g}\varphi^2_\beta(B_\varphi)B_\varphi;\\
B_r&=&-\frac{\varepsilon}{g}B_\varphi\varphi_\beta(B_\varphi).
\end{eqnarray}
\end{subequations}
\begin{example}
With the parameter set $\varepsilon=0.1,\delta=0.01,g=0.99,k_\alpha=k_\beta=1$ from Section 3 of \cite{FBR}, there are five distinct steady state solutions, given to five significant digits by
\[
(B_r^\ast,B_\varphi^\ast)=(0,0),\,(\pm 0.01066, \pm 0.10154),\,(\pm 0.08246, \mp 1.2522)
\]
\end{example}

However, if $\sigma_1\neq 0$, it follows from \eqref{eq:nonlin} that the equilibrium solution $\mathbf{B}^\ast=(B_r^\ast,B_\varphi^\ast)=(0,0)$ is preserved by such perturbations, but any nonzero equilibrium would have to simultaneously satisfy \eqref{eq:eq} and $\sqrt{2\sigma_1}B_\varphi\varphi_\alpha(B_\varphi)=0$. This latter is impossible, since $\varphi_\alpha(x)=1/(1+k_\alpha x^2)>0$ for all $x\in\mathbb{R}$. So \eqref{eq:nonlin} has only a single equilibrium at $(0,0)$. Since $\varphi_\alpha(x)=1+O(x^2)$ and similarly $\varphi_\beta(x)=1+O(x^2)$, we may linearise about this equilibrium to obtain
\begin{equation}\label{EQ:LINB}
d\mathbf{B}=\Lambda\mathbf{B}dt+\Sigma\mathbf{B}dW_1(t),\quad t\geq 0,
\end{equation}
where
\[
\Lambda=\begin{pmatrix}
-\varepsilon & -\delta\\
-g & -\varepsilon
\end{pmatrix};\quad \Sigma=\begin{pmatrix}
0 & -\sqrt{2\sigma_1}\\
0 & 0
\end{pmatrix}
\]
are the drift and diffusion coefficient matrices.

\subsection{Characterising the departure from normality of the drift}\label{sec:scalnn}
We wish to characterise the departure from normality of the drift coefficient of \eqref{EQ:LINB}. A review of several scalar measures is provided in Trefethen \& Embree~\cite[Chapter 48]{Tref}, who note that the non-normality of a matrix is too complex to be fully characterised by a scalar measure. However, for the purposes of this article, the selection of the following scalar measure, due to Henrici~\cite{Hen}, suffices to illustrate the influence of parameter variation on the non-normality of the drift coefficient of \eqref{EQ:LINB}. 
\begin{definition}\label{def:Hnn}
The departure from normality of a square matrix $\Lambda\in\mathbb{C}^{N\times N}$ may be characterised as
\[
\text{dep}(\Lambda)=\min_{\substack{\Lambda=U(D+R)U^\ast\\ \text{(Schur decomposition)}}}\|R\|,
\]
where $U$ is a unitary matrix, $D$ is a diagonal matrix, and $R$ is a strictly upper triangular matrix. 
\end{definition}
In the Frobenius norm, this measure has computational form
\begin{equation}\label{eq:Hnn}
\text{dep}_F(\Lambda)=\sqrt{\|\Lambda\|^2_F-\|D\|^2_F}=\sqrt{\sum_{j=1}^{N}s_j^2-\sum_{j=1}^{N}|\mu_j|^2},
\end{equation}
where $\{s_j\}_{j=1}^N$ and $\{\mu_j\}_{j=1}^{N}$ are the singular values and eigenvalues of $\Lambda$, respectively. Recall that if $\Lambda=W\Delta V^\ast$ is the singular value decomposition of $A$, where $W$ and $V^\ast$ are unitary matrices and $\Delta$ is a diagonal matrix uniquely determined by $\Lambda$, then the singular values of $\Lambda$ are the diagonal entries of $\Delta$.
\begin{example}
If $\Lambda$ is the drift coefficient of \eqref{EQ:LINB}, the eigenvalues of $A$ are given by $
\mu_1=-\varepsilon\pm\sqrt{\delta g}$
and the singular values by
\begin{equation*}
s_{1,2}=\frac{1}{\sqrt{2}}\sqrt{2\varepsilon^2+\delta^2+g^2\pm\sqrt{((\varepsilon^2+\delta^2)-\varepsilon^2-g^2)^2+4\varepsilon^2(g+\delta)^2}}.
\end{equation*}
Therefore
\begin{eqnarray*}
\text{dep}_F(\Lambda)=\sqrt{s_1^2+s_2^2-|\mu_1|^2-|\mu_2|^2}&=&\sqrt{2\varepsilon^2+\delta^2+g^2-2(\varepsilon^2+\delta g)}\\
&=&|g-\delta|.
\end{eqnarray*}
We see that, since the diagonal entries are identical, the departure from normality under this measure corresponds to the absolute difference of the off-diagonal entries, and is independent of $\varepsilon$. Moreover, since $g\approx 1$ and $\delta\in[0.01,0.1]$, $\text{dep}_F(\Lambda)$ will typically be in the range $[0.9,0.99]$.
\end{example}

\subsection{The stability of equilibria of stochastic systems}
Khas'minskii~\cite{khas} maps the relationship between the stochastic stability of the equilibria of the linear approximation \eqref{EQ:LINB} and the nonlinear system \eqref{eq:nonlin}. First, note that
$\Lambda,\Sigma$ and $f_1,f_2$ satisfy
\[
\lim_{|\mathbf{x}|\to 0}\frac{|f_1(\mathbf{x})-\Lambda\mathbf{x}|+|f_2(\mathbf{x})-\Sigma\mathbf{x}|}{|\mathbf{x}|}=0.
\]
Next, we define various relevant notions of stochastic stability.
\begin{definition}\label{def:asstab}
The solution $\mathbf{X}(t)\equiv \mathbf{0}$ of \eqref{eq:nonlin} is \emph{stable in probability} if for any $\epsilon>0$
\[
\lim_{\mathbf{x}_0\to 0}\mathbb{P}\left[\sup_{t>0}|\mathbf{X}(t,\mathbf{x}_0)|>\epsilon\right]=0.
\]
It is \emph{asymptotically stable in probability} if it is stable in probability and
\[
\lim_{\mathbf{x}_0\to \mathbf{0}}\mathbb{P}\left[\lim_{t\to\infty}\mathbf{X}(t,\mathbf{x}_0)=0\right]=1
\]
It is \emph{asymptotically stable in the large} if it is stable in probability and for all $\mathbf{x}_0$
\begin{equation}\label{eq:asystabe}
\mathbb{P}\left[\lim_{t\to\infty}\mathbf{X}(t,\mathbf{x}_0)=0\right]=1.
\end{equation}
\end{definition}

\begin{definition}
The solution $\mathbf{X}(t)\equiv \mathbf{0}$ of \eqref{eq:nonlin} is \emph{mean-square stable} if 
\[
\lim_{\delta\to 0}\sup_{\substack{|\mathbf{x}_0|\leq \delta,\\t\geq 0}}\mathbb{E}|\mathbf{X}(t,\mathbf{x}_0)|^2=0.
\]
It is \emph{asymptotically mean-square stable} if it is mean-square stable and
\[
\lim_{t\to\infty}\mathbb{E}|\mathbf{X}(t,\mathbf{x}_0)|^2=0.
\]
It is \emph{exponentially mean-square stable} if for some constants $C$ and $\tilde\alpha$,
\[
\mathbb{E}|\mathbf{X}(t,\mathbf{x}_0)|^2\leq C|\mathbf{x}_0|^2e^{-\tilde\alpha t},\quad t\geq 0.
\]
\end{definition}
Then we have the following results.
\begin{theorem}[Khas'minskii~\cite{khas}]\label{thm:khas1}
If the equilibrium solution $\mathbf{B}(t)\equiv \mathbf{0}$ of the linear stochastic system \eqref{EQ:LINB} is asymptotically stable in the large, then the corresponding equilibrium solution  of the nonlinear stochastic system \eqref{eq:nonlin} is asymptotically stable in probability.
\end{theorem}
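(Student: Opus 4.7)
The plan is to transfer a stochastic Lyapunov function from the linear system to the nonlinear one, and then invoke the classical Lyapunov-function criterion for asymptotic stability in probability. First I would use the hypothesis that the equilibrium of \eqref{EQ:LINB} is asymptotically stable in the large to produce a $C^2$ positive definite Lyapunov function $V:\mathbb{R}^2\to\mathbb{R}^+$ whose infinitesimal generator $L$ under the linear diffusion satisfies $LV(\mathbf{x})\leq -c\,V(\mathbf{x})$ for some $c>0$. For a linear SDE, asymptotic stability in the large is equivalent to $p$-th moment exponential stability for some (possibly small) $p>0$, and a function of the form $V(\mathbf{x})=(\mathbf{x}^T P\mathbf{x})^{p/2}$ with $P$ positive definite satisfies such an inequality together with the pointwise bounds $c_1|\mathbf{x}|^p\leq V(\mathbf{x})\leq c_2|\mathbf{x}|^p$, $|\nabla V(\mathbf{x})|\leq c_3|\mathbf{x}|^{p-1}$ and $\|\nabla^2 V(\mathbf{x})\|\leq c_4|\mathbf{x}|^{p-2}$ away from $\mathbf{0}$.

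Next, denote by $\widetilde L$ the generator of \eqref{eq:nonlin} and write $f_1(\mathbf{x})=\Lambda\mathbf{x}+r_1(\mathbf{x})$, $f_2(\mathbf{x})=\Sigma\mathbf{x}+r_2(\mathbf{x})$. The hypothesis recalled in the excerpt gives $|r_1(\mathbf{x})|+|r_2(\mathbf{x})|=o(|\mathbf{x}|)$ as $|\mathbf{x}|\to 0$; combined with the derivative bounds on $V$, the difference $\widetilde LV(\mathbf{x})-LV(\mathbf{x})$ is $o(V(\mathbf{x}))$, so there exists a neighbourhood $U$ of the origin on which $\widetilde LV(\mathbf{x})\leq -\tfrac{c}{2}V(\mathbf{x})\leq 0$. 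Applying It\^o's formula on $U$, the stopped process $V(\mathbf{B}(t\wedge\tau_U))$, with $\tau_U$ the first exit time from $U$, is a nonnegative supermartingale; the standard supermartingale-convergence argument then yields stability in probability, and the strict negative bound on $\widetilde LV$ upgrades it to asymptotic stability in probability, since almost-sure convergence $\mathbf{B}(t)\to\mathbf{0}$ holds on $\{\tau_U=\infty\}$, an event whose probability can be made arbitrarily close to $1$ by taking $\mathbf{x}_0$ sufficiently close to $\mathbf{0}$.

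The main obstacle is the first step. The converse Lyapunov theorem for linear stochastic systems is considerably more delicate than its deterministic analogue, where a Hurwitz matrix is immediately tied to a positive definite solution of the matrix Lyapunov equation; here the construction rests on Khas'minskii's equivalence between pathwise asymptotic stability in the large and exponential decay of the $p$-th moment for some (possibly small) $p>0$, together with a careful choice of that exponent so that the nonlinear remainder still lies in the $o(V(\mathbf{x}))$ regime. Once this Lyapunov function has been produced the perturbation estimate and the supermartingale argument that complete the proof are essentially routine.
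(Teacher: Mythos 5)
The paper itself gives no proof of this statement---it is quoted from Khas'minskii \cite{khas}---so your proposal has to be measured against the classical argument there, whose architecture you do reproduce: a homogeneous degree-$p$ Lyapunov function for the linear system \eqref{EQ:LINB}, an $o(V)$ perturbation estimate exploiting the $o(|\mathbf{x}|)$ remainders of $f_1,f_2$ in \eqref{eq:nonlin}, and a supermartingale argument in a small neighbourhood of the origin. The second and third steps are indeed routine and written correctly.

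The gap sits exactly at the step you yourself single out as the main obstacle, and the way you dispose of it does not work. It is not a known result (and you offer no argument) that almost sure asymptotic stability in the large of a linear SDE yields a positive definite $P$ and a small $p>0$ with $LV\le -cV$ for $V(\mathbf{x})=(\mathbf{x}^TP\mathbf{x})^{p/2}$. Writing $V(\mathbf{x})=|\mathbf{x}|^p w(\mathbf{x}/|\mathbf{x}|)$ with $w(\theta)=(\theta^TP\theta)^{p/2}$, the angular weight $w$ tends to the constant $1$ as $p\to 0$, so to leading order in $p$ the inequality $LV\le -cV$ demands that the \emph{worst-angle} exponential growth functional be negative; but $\lambda<0$ only asserts that its \emph{average} against the stationary law of the angular process \eqref{eq:angmot} is negative. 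Quadratic forms are a three-parameter family on the circle and cannot in general supply the angular compensation that the sharp constructions provide: either the principal-eigenfunction construction $V(\mathbf{x})=|\mathbf{x}|^p w_p(\mathbf{x}/|\mathbf{x}|)$ associated with the $p$-th moment Lyapunov exponent, or Khas'minskii's converse theorem for exponential $p$-stability, which takes $V(\mathbf{x})=\int_0^T\mathbb{E}|\mathbf{X}^{\mathbf{x}}(u)|^p\,du$; this $V$ is homogeneous of degree $p$, satisfies $c_1|\mathbf{x}|^p\le V(\mathbf{x})\le c_2|\mathbf{x}|^p$, $LV\le -c_3|\mathbf{x}|^p$, and has precisely the gradient and Hessian bounds you list, away from the origin. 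Note also that the preliminary step ``asymptotic stability in the large $\Rightarrow$ exponential $p$-stability for some small $p$'' rests on $g(p)/p\to\lambda$ as $p\downarrow 0$ (cf.\ \cite{Arnold1}), which uses the nondegeneracy (hypoellipticity) of the angular process---satisfied for \eqref{EQ:LINB}, as the paper notes---not merely linearity. If you replace the quadratic-power ansatz by either standard construction, the remainder of your argument goes through unchanged and recovers the proof in \cite{khas}.
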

\begin{theorem}[Khas'minskii~\cite{khas}]\label{thm:khas2}
If all solutions of the linear stochastic system \eqref{EQ:LINB} satisfy
\begin{equation}\label{eq:asunst}
\mathbb{P}\left[\lim_{t\to\infty}|\mathbf{B}(t)|=\infty\right]=1,
\end{equation}
then the corresponding equilibrium of the nonlinear stochastic system \eqref{eq:nonlin} is unstable in probability.
\end{theorem}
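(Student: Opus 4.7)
The plan is to prove instability in probability by contradiction. Assuming the equilibrium $\mathbf{X}\equiv\mathbf{0}$ of \eqref{eq:nonlin} were stable in probability, Definition \ref{def:asstab} would furnish, for any prescribed $\epsilon>0$ and $\eta>0$, a $\delta>0$ with $\mathbb{P}[\sup_{t\geq 0}|\mathbf{X}(t,\mathbf{x}_0)|\geq\epsilon]\leq\eta$ whenever $|\mathbf{x}_0|\leq\delta$. I would derive a contradiction by proving that, on a sufficiently small punctured neighborhood of $\mathbf{0}$, the exit time $\tau_\epsilon:=\inf\{t\geq 0:|\mathbf{X}(t,\mathbf{x}_0)|\geq\epsilon\}$ is almost surely finite for every admissible $\mathbf{x}_0$, so that the probability in question is identically $1$ rather than being controllable by $\eta$.

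The linchpin is a Khas'minskii-type auxiliary function for the linear system \eqref{EQ:LINB}. Exploiting the scale invariance of \eqref{EQ:LINB}, I would seek $V$ in the form $V(\mathbf{x})=-\log|\mathbf{x}|+h(\mathbf{x}/|\mathbf{x}|)$, where the angular corrector $h\in C^2(S^{N-1})$ is obtained by solving a Poisson equation $\mathcal{L}_{\text{sph}} h = Q-\lambda$ on the unit sphere. Here $Q$ denotes the radial drift of $\log|\mathbf{B}|$ produced by It\^o's formula, $\mathcal{L}_{\text{sph}}$ is the generator of the spherical projection $\mathbf{B}/|\mathbf{B}|$, and $\lambda=\int_{S^{N-1}} Q\,d\mu$ is the top Lyapunov exponent of \eqref{EQ:LINB} with respect to the invariant measure $\mu$ of that projection. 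The construction makes $\mathcal{L}_{\text{lin}} V\equiv -\lambda$, and the pathwise divergence \eqref{eq:asunst}, interpreted through the Furstenberg--Khas'minskii ergodic formula, forces $\lambda>0$; in particular $\mathcal{L}_{\text{lin}} V\leq -c$ for some $c>0$ throughout any small enough punctured neighborhood of $\mathbf{0}$.

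Transferring this inequality to the nonlinear generator is routine: the linearisation hypothesis $f_i(\mathbf{x})-\Lambda_i\mathbf{x}=o(|\mathbf{x}|)$, combined with the explicit $1/|\mathbf{x}|$-type scaling of $\nabla V$ and $\nabla^2 V$, yields $\mathcal{L}_{\text{nonlin}} V - \mathcal{L}_{\text{lin}} V \to 0$ as $|\mathbf{x}|\to 0$, so after possibly shrinking $\epsilon$ one has $\mathcal{L}_{\text{nonlin}} V \leq -c/2$ on $U_\epsilon\setminus\{\mathbf{0}\}$, with $U_\epsilon:=\{|\mathbf{x}|<\epsilon\}$. Applying It\^o and optional stopping to $V(\mathbf{X}(t\wedge\tau_\epsilon))$, and using that $V$ is bounded below on $U_\epsilon$, delivers $\mathbb{E}[\tau_\epsilon]\leq (2/c)(V(\mathbf{x}_0)-\inf_{U_\epsilon} V)<\infty$ whenever $\mathbf{x}_0\in U_\epsilon\setminus\{\mathbf{0}\}$, hence $\tau_\epsilon<\infty$ almost surely --- the contradiction. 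The main obstacle is the construction of $h$: it rests on the solvability of the Poisson equation on $S^{N-1}$, which in turn requires hypoelliptic non-degeneracy of the spherical projection of \eqref{EQ:LINB} and a clean exclusion of degenerate scenarios in which $|\mathbf{B}|$ escapes to infinity despite a vanishing top Lyapunov exponent.
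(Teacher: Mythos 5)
The paper itself does not prove this statement; it is quoted verbatim from Khas'minskii's monograph, so your proposal has to be judged against the classical first-approximation argument it is implicitly reconstructing. Your overall strategy is exactly that classical one -- build the angular corrector $h$ from the Poisson equation on the sphere so that $V(\mathbf{x})=-\log|\mathbf{x}|+h(\mathbf{x}/|\mathbf{x}|)$ satisfies $\mathcal{L}_{\mathrm{lin}}V\equiv-\lambda$, transfer to the nonlinear generator using the $o(|\mathbf{x}|)$ linearisation error together with the $|\mathbf{x}|^{-1}$, $|\mathbf{x}|^{-2}$ scaling of $\nabla V$, $\nabla^2 V$, and conclude via an exit-time/optional-stopping argument -- and that plan is sound. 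However, as written there is a genuine gap at the linchpin: you never derive $\lambda>0$ from the hypothesis \eqref{eq:asunst}, and you explicitly leave ``divergence with vanishing top exponent'' as an unresolved obstacle. But that implication is precisely the only place where the hypothesis \eqref{eq:asunst} enters your proof; if $\lambda=0$ your drift inequality $\mathcal{L}_{\mathrm{nonlin}}V\le -c/2$ collapses and nothing is proved. The gap is closable with the same corrector: It\^o's formula for the linear system gives $\log|\mathbf{B}(t)|=\log|\mathbf{B}(0)|+\lambda t+\bigl(h(\theta(0))-h(\theta(t))\bigr)+M_t$, where $\theta$ is the angular process and $M$ is a local martingale with $\langle M\rangle_t\le Ct$. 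If $\lambda=0$, then either $\langle M\rangle_\infty=\infty$, in which case $M$ is a time-changed Brownian motion and $\liminf_{t\to\infty}\log|\mathbf{B}(t)|=-\infty$ a.s., or $\langle M\rangle_\infty<\infty$ with positive probability, in which case $M$ converges and $\log|\mathbf{B}(t)|$ stays bounded there; either way \eqref{eq:asunst} fails, and $\lambda<0$ is excluded since then $|\mathbf{B}(t)|\to 0$. Hence \eqref{eq:asunst} forces $\lambda>0$. Note also that for \eqref{EQ:LINB} the angular diffusion coefficient vanishes at isolated angles, so the existence and boundedness of $h$ rests on the hypoelliptic analysis of the Imkeller--Lederer type rather than ellipticity; you gesture at this but it is part of what must be checked.

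A second, smaller gap is the optional-stopping step: $V$ and its derivatives are singular at the origin, so applying It\^o's formula up to $\tau_\epsilon$ and deducing $\mathbb{E}[\tau_\epsilon]<\infty$ presupposes that the path cannot reach $\mathbf{0}$. Either invoke non-attainability of the equilibrium (the coefficients of \eqref{eq:nonlin} are locally Lipschitz and vanish at $\mathbf{0}$), or, more robustly, stop the process additionally at an inner radius $r$: the finite-expected-exit-time bound shows the annulus $\{r<|\mathbf{x}|<\epsilon\}$ is left in finite time a.s., while the supermartingale inequality $\mathbb{E}\,V(\mathbf{X}(t\wedge\tau))\le V(\mathbf{x}_0)$ together with $V\approx-\log r$ on the inner boundary shows the probability of exiting through the inner boundary tends to $0$ as $r\to 0$. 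This gives $\mathbb{P}[\tau_\epsilon<\infty]=1$ for every small $\mathbf{x}_0\neq\mathbf{0}$, which is the contradiction with Definition \ref{def:asstab} you are after. With these two repairs your argument is a correct proof of the theorem.
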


\subsection{Exponential mean-square stability}
In \cite{FBR}, the authors use a result from Ryashko~\cite{Ryash} to show that the exponential mean-square stability of the equilibrium of \eqref{EQ:LINB} is given by the following result.
\begin{theorem}\label{thm:FBR}
The equilibrium $\mathbf{B}=0$ of system \eqref{EQ:LINB} is exponentially mean-square stable if and only if 
\begin{enumerate}
\item the equilibrium of the unperturbed system is asymptotically stable;
\item $tr(\mathbf{M}\mathbf{S})<1$, where $M$ is the second moments stationary matrix for the system with additive noise
\[
\begin{pmatrix}
dB_r\\ dB_\varphi
\end{pmatrix}=\begin{pmatrix}
-\varepsilon & -\delta\\
-g & -\varepsilon
\end{pmatrix}\begin{pmatrix}
B_r\\ B_\varphi
\end{pmatrix}
+\begin{pmatrix} \sqrt{2\sigma_1}\\ 0\end{pmatrix}dW_1,
\]
and $\mathbf{S}$ is given by
\[
S=\begin{pmatrix}
0 & 1\\
0 & 0
\end{pmatrix}.
\]
\end{enumerate}
\end{theorem}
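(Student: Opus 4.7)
The plan is to apply Ryashko's exponential mean-square stability criterion~\cite{Ryash} to the linear SDE (\ref{EQ:LINB}), translating its abstract spectral condition into the concrete trace inequality $\mathrm{tr}(MS)<1$ via the rank-one structure of $\Sigma$. First, I would apply It\^o's formula to $\mathbf{B}(t)\mathbf{B}(t)^T$ to derive the matrix Lyapunov equation $\dot P = \Lambda P + P\Lambda^T + \Sigma P\Sigma^T$ for $P(t) = \mathbb{E}[\mathbf{B}(t)\mathbf{B}(t)^T]$. Exponential mean-square stability of the equilibrium is then equivalent to the Lyapunov operator $\mathcal{L}: P\mapsto\Lambda P+P\Lambda^T+\Sigma P\Sigma^T$, regarded as acting on the three-dimensional space of symmetric $2\times 2$ matrices, being Hurwitz.

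Second, I would split $\mathcal{L}=\mathcal{L}_0+\mathcal{N}$ into the deterministic Lyapunov part $\mathcal{L}_0 P = \Lambda P+P\Lambda^T$ and the noise-induced part $\mathcal{N}P = \Sigma P\Sigma^T$. Condition (1) is exactly the statement that $\Lambda$ is Hurwitz, which is equivalent to $\mathcal{L}_0$ being Hurwitz and therefore necessary. Exploiting the rank-one factorisation $\Sigma = -\sqrt{2\sigma_1}\,e_1 e_2^T$, one finds $\mathcal{N}P = 2\sigma_1 P_{22}\, e_1 e_1^T$, so $\mathcal{N}$ is itself rank-one on the three-dimensional space above; moreover $M$ is by construction the unique symmetric solution of $\Lambda M + M\Lambda^T = -2\sigma_1\,e_1 e_1^T$, equivalently $M = -\mathcal{L}_0^{-1}(2\sigma_1\,e_1 e_1^T)$.

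Third, I would factorise $\mathcal{L}=\mathcal{L}_0(I+\mathcal{L}_0^{-1}\mathcal{N})$ and invoke the Sylvester/Weinstein--Aronszajn determinant identity for a rank-one perturbation of the identity to express $\det\mathcal{L}$ as $\det(\mathcal{L}_0)$ multiplied by a factor of the form $1-\mathrm{tr}(MS)$, with $S$ encoding the rank-one structure of $\mathcal{N}$ as in the statement; thus $\mathcal{L}$ is singular precisely when $\mathrm{tr}(MS)=1$. Finally, to upgrade nonsingularity to Hurwitz stability I would use a continuous homotopy in $\sigma_1$: because $\mathcal{N}$ preserves the cone of positive semidefinite matrices, a Perron--Frobenius style argument shows that the eigenvalue of $\mathcal{L}(\sigma_1)$ with largest real part is real and monotonically non-decreasing in $\sigma_1$, and so can leave the open left half-plane only through the origin---precisely at $\mathrm{tr}(MS)=1$. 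I expect this positivity/Perron--Frobenius step to be the main technical obstacle; the earlier steps are direct computation, and once the critical threshold has been located both directions of the biconditional follow immediately.
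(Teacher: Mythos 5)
The paper does not actually prove this theorem: it is quoted from Fedotov, Bashkirtseva and Ryashko \cite{FBR}, who in turn invoke Ryashko's criterion \cite{Ryash}, and the paper's own working tool is the alternative characterisation of Theorem \ref{thm:expMS} via the spectral abscissa of the Kronecker matrix \eqref{def:StabM}--\eqref{eq:S}. Your proposal is therefore a genuinely different, self-contained route, and its skeleton is sound: the moment equation $\dot P=\Lambda P+P\Lambda^T+\Sigma P\Sigma^T$ is correct, exponential mean-square stability is indeed equivalent to the operator $\mathcal{L}=\mathcal{L}_0+\mathcal{N}$ being Hurwitz on the $3$-dimensional space of symmetric matrices, $\mathcal{N}P=2\sigma_1P_{22}\,e_1e_1^T$ is rank one, $M=-\mathcal{L}_0^{-1}(2\sigma_1e_1e_1^T)$, and Weinstein--Aronszajn gives $\det\mathcal{L}=\det(\mathcal{L}_0)\bigl(1-M_{22}\bigr)$. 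Your Perron--Frobenius step is also legitimate in this finite-dimensional setting, since $e^{t\mathcal{L}}P_0=\mathbb{E}\bigl[\Phi(t)P_0\Phi(t)^T\bigr]$ preserves the solid cone of positive semidefinite matrices, so the spectral abscissa is a real eigenvalue; compared with the paper's Theorem \ref{thm:expMS}, your argument buys an explicit threshold in closed form rather than a numerically computed spectral abscissa.

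Two points need care. First, the scalar that actually emerges from your rank-one computation is $M_{22}=\mathrm{tr}(SMS^T)=\mathrm{tr}(MS^TS)$, not the literal $\mathrm{tr}(MS)=M_{21}$ of the statement; only the former reproduces the Remark's explicit subcritical condition $\sigma_1<2\varepsilon(\varepsilon^2-g\delta)/g^2$ (a direct solve of $\Lambda M+M\Lambda^T+2\sigma_1e_1e_1^T=0$ gives $M_{22}=g^2\sigma_1/\bigl(2\varepsilon(\varepsilon^2-g\delta)\bigr)$, while $M_{21}<0$ there), so you should state explicitly how the printed $S$ enters the trace and not simply trust the transcription. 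Second, some small gaps should be closed: necessity of condition (1) follows from the mean equation $\mathbb{E}\mathbf{B}(t)=e^{\Lambda t}\mathbf{x}_0$ (it is not automatic from the decomposition), and the monotonicity-in-$\sigma_1$ claim, while true for positive perturbations of generators of positive semigroups, can be bypassed: for Hurwitz $\Lambda$ the three eigenvalues of $\mathcal{L}_0$ are $2\mu_1,2\mu_2,\mu_1+\mu_2$, so $\det\mathcal{L}_0<0$; hence $M_{22}>1$ forces $\det\mathcal{L}>0$, which for a real $3\times3$ matrix guarantees an eigenvalue with positive real part (instability), while for $M_{22}<1$ the realness of the leading eigenvalue plus continuity in $\sigma_1$ and nonvanishing of $\det\mathcal{L}$ keeps the spectrum in the open left half-plane. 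With those repairs your outline yields a complete proof.
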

\begin{remark}
For the drift-subcritical case (i.e. when $g\delta<\varepsilon^2$), Theorem \ref{thm:FBR} gives the following necessary and sufficient condition for exponential mean-square stability:
\[
\sigma_1<\frac{2\varepsilon(\varepsilon^2-g\delta)}{g^2}.
\]
If we take, for example, $g=0.99$, $\delta=0.01$, then the drift of \eqref{EQ:LINB} is subcritical if and only if $\varepsilon>0.09949874371$.
\end{remark}

Since we will compare exponential mean-square stability regions to regions of positive and negative $\lambda$ over a wide range of parameters, we make use of an alternative technique to compute such a condition which is not restricted to the subcritical case. In Buckwar \& Sickenberger~\cite{BuSi2}, it was shown that the system of first-order ordinary differential equations governing the time evolution of $\mathbb{E}\mathbf{B}(t)\mathbf{B}(t)^T$, where  $\mathbf{B}$ is  any solution of \eqref{EQ:LINB}, has coefficient
\begin{equation}\label{def:StabM}
 \mathcal{S} = \mathbb{I}_{d} \otimes \Lambda + \Lambda\otimes \mathbb{I}_{d} + \Sigma \otimes \Sigma \,.
\end{equation}
where $\otimes$ denotes the Kronecker product of two matrices.
Eq. \eqref{def:StabM} leads to the following result:
\begin{theorem}\label{thm:expMS}
The equilibrium of \eqref{EQ:LINB} will be exponentially mean-square stable if and only if $\tilde\alpha(S)<0$, where $\tilde\alpha(S)$ is the spectral abscissa of
\begin{equation}\label{eq:S}
\mathcal{S}=\begin{pmatrix}
-2\varepsilon & -\delta & -\delta & 2\sigma_1\\
-g & -2\varepsilon & 0 & -\delta\\
-g & 0 & -2\varepsilon & -\delta\\
0 & -g & -g & -2\varepsilon
\end{pmatrix}.
\end{equation}
\end{theorem}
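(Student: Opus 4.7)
The plan is to reduce the claim to a direct application of the Buckwar--Sickenberger result cited from \cite{BuSi2}, then verify by an explicit Kronecker-product computation that the resulting coefficient matrix is the $\mathcal{S}$ displayed in \eqref{eq:S}. First I would apply It\^{o}'s formula to the matrix-valued process $\mathbf{B}(t)\mathbf{B}(t)^T$ with $\mathbf{B}$ a solution of \eqref{EQ:LINB}, take expectations and observe that the martingale term vanishes; this yields the Lyapunov-type linear ODE
\[
\dot P(t)=\Lambda P(t)+P(t)\Lambda^{T}+\Sigma P(t)\Sigma^{T},\qquad P(t):=\mathbb{E}[\mathbf{B}(t)\mathbf{B}(t)^{T}].
\]
Vectorising via the standard identity $\mathrm{vec}(AXB)=(B^{T}\otimes A)\mathrm{vec}(X)$ converts the three summands on the right into $(I_{d}\otimes \Lambda)$, $(\Lambda\otimes I_{d})$ and $(\Sigma\otimes \Sigma)$ acting on $\mathrm{vec}(P(t))$, which is exactly the matrix $\mathcal{S}$ in \eqref{def:StabM}.

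Next I would use the equivalence between exponential mean-square stability of the SDE equilibrium and exponential stability of the trivial solution of the linear ODE $\dot p=\mathcal{S}p$. The ``if'' direction is immediate: if $\tilde\alpha(\mathcal{S})<0$, then $\|P(t)\|\le C e^{-\tilde\alpha t}\|P(0)\|$, and since $P(0)=\mathbf{x}_{0}\mathbf{x}_{0}^{T}$ one has $\mathbb{E}|\mathbf{B}(t,\mathbf{x}_{0})|^{2}=\mathrm{tr}(P(t))\le C|\mathbf{x}_{0}|^{2}e^{-\tilde\alpha t}$. For the ``only if'' direction, any non-zero initial symmetric positive-semidefinite $P(0)$ of rank one arises as $\mathbf{x}_{0}\mathbf{x}_{0}^{T}$; exponential decay of $\mathbb{E}|\mathbf{B}(t)|^{2}$ uniformly in such $\mathbf{x}_{0}$ forces $P(t)\to 0$ exponentially on the cone of rank-one symmetric matrices, whose linear span is the whole space of symmetric matrices, so the flow on all of $\mathbb{R}^{d^{2}}$ decays exponentially and hence $\tilde\alpha(\mathcal{S})<0$.

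Finally I would verify the explicit form of $\mathcal{S}$ by substituting the particular $\Lambda$ and $\Sigma$ of \eqref{EQ:LINB}. A block computation gives
\[
I_{2}\otimes\Lambda=\begin{pmatrix}\Lambda & 0\\ 0 & \Lambda\end{pmatrix},\qquad \Lambda\otimes I_{2}=\begin{pmatrix}-\varepsilon I_{2} & -\delta I_{2}\\ -gI_{2} & -\varepsilon I_{2}\end{pmatrix},
\]
and since the only non-zero entry of $\Sigma$ is $\Sigma_{12}=-\sqrt{2\sigma_{1}}$, the Kronecker product $\Sigma\otimes\Sigma$ has a unique non-zero entry $2\sigma_{1}$ in the $(1,4)$ position. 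Summing the three contributions reproduces \eqref{eq:S} entry by entry.

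The only non-trivial step is the ``only if'' equivalence between exponential decay of the quadratic-form functional $\mathbf{x}_{0}\mapsto \mathrm{tr}(e^{t\mathcal{S}\cdot}\mathrm{vec}(\mathbf{x}_{0}\mathbf{x}_{0}^{T}))$ and exponential decay of $e^{t\mathcal{S}}$ itself; I would handle this by noting that the rank-one symmetric matrices span the space of symmetric matrices and that $\mathcal{S}$ preserves this subspace, so the spectral abscissa restricted to symmetric initial data equals $\tilde\alpha(\mathcal{S})$. Beyond this, the proof is routine algebra, and I would keep the presentation short by invoking \cite{BuSi2} for the vectorised moment equation and simply recording the explicit $\mathcal{S}$.
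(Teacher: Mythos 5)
Your proposal is correct and follows essentially the same route as the paper, which simply invokes the vectorised second-moment equation \eqref{def:StabM} from \cite{BuSi2} and then records the matrix $\mathcal{S}$ for the particular $\Lambda$ and $\Sigma$ of \eqref{EQ:LINB}; your It\^o/vec derivation and the Kronecker computation reproducing \eqref{eq:S} entry by entry are accurate. The only loose point is the ``only if'' direction: decay for rank-one symmetric initial data controls $\mathcal{S}$ only on the invariant subspace of symmetric matrices, so to conclude $\tilde\alpha(\mathcal{S})<0$ you should add the one-line observation that the complementary antisymmetric subspace is also invariant, one-dimensional, and carries the eigenvalue $-2\varepsilon<0$ (the $\lambda_1$ in the paper's remark), which is why the abscissa restricted to symmetric data coincides with $\tilde\alpha(\mathcal{S})$ here.
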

\begin{remark}
The eigenvalues $\lambda_i$, $i=1,\ldots,4$ of \eqref{eq:S} are as follows:
\begin{eqnarray*}
\lambda_1&=&-2\,\varepsilon;\\ \lambda_2&=&\frac{1}{3}\,\sqrt 
[3]{54\,{g}^{2}\sigma+6\,\sqrt {81\,{g}^{4}{
\sigma}^{2}-48\,{g}^{3}{\delta}^{3}}}+{\frac {4 g \delta}{\sqrt [3]{54\,{g}^{
2}\sigma+6\,\sqrt {81\,{g}^{4}{\sigma}^{2}-48\,{g}^{3}{\delta}^{3}}}}
}-2\,\varepsilon;\\ 
\lambda_3&=&-\frac{1}{6}\,\sqrt [3]{54\,{g}^{2}\sigma+6
\,\sqrt {81\,{g}^{4}{\sigma}^{2}-48\,{g}^{3}{\delta}^{3}}}-{\frac 
{2 g \delta}{\sqrt [3]{54\,{g}^{2}\sigma+6\,\sqrt {81\,{g}^{4}{\sigma}^{2}-48\,
{g}^{3}{\delta}^{3}}}}}-2\,\varepsilon\\&&+
\frac{i\sqrt {3}}{6} \left(\sqrt [3]{54\,{g}^{2}\sigma+6\,\sqrt {81\,{g}^{4}{\sigma}^{2}-48\,{g}^{
3}{\delta}^{3}}}-{\frac {12 g 
\delta}{\sqrt [3]{54\,{g}^{2}\sigma+6\,\sqrt {81\,{g}^{4}{\sigma}^{2}-48\,{g}^{3}{\delta}^{3}
}}}} \right); \\ 
\lambda_4&=&-\frac{1}{6}\sqrt 
[3]{54\,{g}^{2}\sigma+6\,\sqrt {81\,{g}^{4}{
\sigma}^{2}-48\,{g}^{3}{\delta}^{3}}}-{\frac {2 g \delta}{\sqrt [3]{54\,{g}^{
2}\sigma+6\,\sqrt {-48\,{g}^{3}{\delta}^{3}+81\,{g}^{4}{\sigma}^{2}}}}
}-2\,\varepsilon\\&&-\frac{i\sqrt {3}}{6} \left(\sqrt [3]{54\,{g}^{2}\sigma+
6\,\sqrt {81\,{g}^{4}{\sigma}^{2}-48\,{g}^{3}{\delta}^{3}}}-{
\frac {12 g \delta}{\sqrt [3]{54\,{g}^{2}\sigma+6\,\sqrt 
{81\,{g}^{4}{\sigma}^{2}-48\,{g}^{3}{\delta}^{3}}}}} \right).
\end{eqnarray*}
While it is difficult to extract an exponential mean-square stability condition in closed form, we can easily compute precise stability regions using Maple.
\end{remark}
\begin{remark}
The asymptotic stability in the large of the linearised equilibrium in the statement of Theorem \ref{thm:khas1} can be replaced with exponential mean-square stability.
However for a necessary and sufficient condition we need to compute the upper Lyapunov exponent.
\end{remark}

\subsection{Upper Lyapunov exponents of linear stochastic systems}
An excellent account of Lyapunov exponents for two-dimensional linear stochastic systems type is given by Imkeller \& Lederer~\cite{IL}, who refer us to Arnold~\cite{ArnoldBook} for a more general treatment. We summarise the relevant ideas for this article in this section, but similarly refer the reader to \cite{ArnoldBook,IL} for more comprehensive exposition.

Consider the two-dimensional stochastic differential equation of Stratonovich type
\begin{equation}\label{eq:main}
d\mathbf{X}(t)=A_0\mathbf{X}(t)dt+A_1\mathbf{X}(t)\circ dW(t),\quad t\geq 0,
\end{equation}
where $A_{0},A_{1}\in\mathbb{R}^{2\times 2}$, $W$ is a scalar Wiener process, and the initial value $\mathbf{X}(0)=\mathbf{x}_0\in\mathbb{R}^2$ is deterministic. 

It was proved in \cite{Arnold2} that the asymptotic exponential growth rate of \eqref{eq:main} is non-random and equal to the (top or upper) Lyapunov exponent:
\begin{definition}\label{def:Lyap}
The (top or upper) Lyapunov exponent of the stochastic system \eqref{eq:main} is 
\[
\lambda=\lim_{t\to\infty}\frac{1}{t}\log|\mathbf{X}(t,\mathbf{x}_0)|,\quad a.s.
\]
\end{definition}
\begin{remark}
The equilibrium solution $\mathbf{X}(t)\equiv 0$ of \eqref{eq:main} is asymptotically stable in the large if and only if $\lambda<0$. Note also that if $\lambda>0$, $\mathbf{X}(t)\equiv \mathbf{0}$ is unstable in the specific sense of \eqref{eq:asunst}. 
\end{remark}

Taking the approach of \cite{Arnold1,Arnold2,Arnold3} to compute $\lambda$, project the non-equilibrium solutions of \eqref{eq:main} from $\mathbb{R}^2\setminus\{0\}$ onto the unit circle $S^0=\{v\in\mathbb{R}^2\,:\,|v|=1\}$ via the transformation $x\mapsto x/|x|$. The resulting process $S(t):=X(t)/|X(t)|$ satisfies the scalar SDE
\[
dS(t)=h_{A_0}(S(t))dt+h_{A_1}(S(t))\circ dW(t),\quad t\geq 0,
\]
on $S^0$, where $h_{A_0}(s)=A_0s-\langle s,A_0s\rangle$, similarly for $h_{A_1}$, and $\langle\cdot,\cdot\rangle$ denotes the inner product in $\mathbb{R}^2$. We can now use the relation $(s_1,s_2)^T=(\cos\varphi,\sin\varphi)^T$ to write the angular component of \eqref{eq:main} as the scalar SDE
\begin{equation}\label{eq:angmot}
d\varphi(t)=h_0(\varphi(t))dt+h_1(\varphi(t))\circ dW(t),\quad t\geq 0.
\end{equation}
for some functions $h_0,h_1$. When \eqref{eq:angmot} is sufficiently nondegenerate, in the sense that it satisfies Hormander's hypoellipticity condition (and this will be the case here), then solutions of \eqref{eq:angmot} have a unique stationary distribution forward in time on the unit circle, and the Lyapunov exponents may be expressed, via Birkhoff's ergodic theorem, as a spatial average over $[-\pi/2,\pi/2)$. For Eq. \eqref{eq:main}, the following form is given by \cite{IL}:

\begin{lemma}\label{lem:lyapfir}
The upper Lyapunov exponent of \eqref{eq:main} is given by 
\begin{equation}\label{eq:lamfir}
\lambda=\frac{\text{tr} A_0}{2}+\frac{1}{2}\int_{-\pi/2}^{\pi/2}\frac{h_1'(\varphi)h_0(\varphi)-h_0'(\varphi)h_1(\varphi)}{h_1(\varphi)}p(\varphi)d\varphi,
\end{equation}
where $p$ is the unique forward stationary distribution of \eqref{eq:angmot}.
\end{lemma}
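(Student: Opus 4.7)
The plan is to apply the Furstenberg--Khasminskii projection onto the unit circle to reduce the computation of $\lambda$ to a spatial average with respect to the stationary distribution of the angular process $\varphi$, and then to match the natural form that emerges with the specific representation in the lemma. First I would introduce polar coordinates $\mathbf{X}(t)=r(t)S(t)$ with $r(t)=|\mathbf{X}(t)|$ and $S(t)=(\cos\varphi(t),\sin\varphi(t))^T$. Because the Stratonovich differential obeys the ordinary chain rule, applying $\log|\cdot|$ to \eqref{eq:main} yields
$$d\log r(t)=Q_0(\varphi(t))\,dt+Q_1(\varphi(t))\circ dW(t),$$
where $Q_i(\varphi):=\langle S(\varphi),A_iS(\varphi)\rangle$ is the radial projection of $A_iS$; the parallel computation for the tangential projection recovers \eqref{eq:angmot} with $h_i(\varphi)=\langle JS(\varphi),A_iS(\varphi)\rangle$ and $J$ the rotation by $\pi/2$.

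Next I would convert the Stratonovich integral in the $\log r$ equation into It\^o form, picking up the correction $\tfrac{1}{2}Q_1'(\varphi)h_1(\varphi)\,dt$ because $h_1$ is the noise coefficient of the angular SDE. Dividing $\log r(t)-\log r(0)$ by $t$ and sending $t\to\infty$, the It\^o martingale contribution vanishes almost surely by the strong law of large numbers for continuous martingales (its quadratic variation grows at most linearly, since $Q_1$ is bounded on the unit circle), while the bounded-variation contribution converges by Birkhoff's ergodic theorem: the H\"ormander hypothesis ensures that \eqref{eq:angmot} admits a unique stationary density $p$ on $\mathbb{RP}^1=[-\pi/2,\pi/2)$, which is the natural state space of the projective process. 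This produces the preliminary formula
$$\lambda=\int_{-\pi/2}^{\pi/2}\Bigl[Q_0(\varphi)+\tfrac{1}{2}Q_1'(\varphi)h_1(\varphi)\Bigr]p(\varphi)\,d\varphi.$$

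The final step, which I anticipate as the main technical obstacle, is the algebraic reduction of this expression to the form stated in the lemma. Direct computation in the coordinates $S=(\cos\varphi,\sin\varphi)$ and $JS=(-\sin\varphi,\cos\varphi)$ yields the pointwise identities $Q_i(\varphi)=\tfrac{1}{2}\text{tr}(A_i)-\tfrac{1}{2}h_i'(\varphi)$ and $Q_i'(\varphi)=2\bigl(h_i(\varphi)-\gamma_i\bigr)$, where $\gamma_i$ denotes the scalar antisymmetric part of $A_i$ (so that $h_i-\gamma_i$ is a combination of $\sin 2\varphi,\cos 2\varphi$ satisfying $h_i''=-4(h_i-\gamma_i)$). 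Substituting these into the preliminary formula isolates the trace and recasts the claim as the identity
$$\int_{-\pi/2}^{\pi/2}\Bigl[h_1(\varphi)^2-\gamma_1 h_1(\varphi)-\tfrac{1}{2}\frac{h_0(\varphi)h_1'(\varphi)}{h_1(\varphi)}\Bigr]p(\varphi)\,d\varphi=0,$$
which I would establish from the once-integrated stationary Fokker--Planck equation for \eqref{eq:angmot}, namely $h_0 p-\tfrac{1}{2}h_1(h_1 p)'=C$ for the projective flux constant $C$. Substituting $h_0 p/h_1=(h_1 p)'/2+C/h_1$ into the middle term and rearranging using $h_1^2-\gamma_1 h_1=-h_1 h_1''/4$, the integrand collapses to $\tfrac{1}{4}(h_1 h_1' p)'$, which integrates to zero by $\pi$-periodicity on $\mathbb{RP}^1$; the residual $C$-contribution vanishes likewise via $\int h_1'/h_1\,d\varphi$. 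The subtle point in this step is the handling of zeros of $h_1$ (which occur in the problem of interest, where $h_1$ reduces to a multiple of $\sin^2\varphi$): there the integrand in the lemma is ostensibly singular, and the H\"ormander hypothesis is essential to justify the cancellations, as detailed in~\cite{IL}.
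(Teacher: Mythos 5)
Your derivation is essentially sound, but note how it relates to the paper: the paper does not prove this lemma at all — formula \eqref{eq:lamfir} is quoted directly from Imkeller \& Lederer \cite{IL}, with the surrounding text only recording the hypoellipticity/unique-stationary-measure hypothesis under which it holds. What you have written is therefore a reconstruction of the underlying Furstenberg--Khasminskii argument rather than an alternative to anything in the paper. Your main steps are correct: the Stratonovich chain rule for $\log r$, the It\^o correction $\tfrac12 Q_1'(\varphi)h_1(\varphi)$, the vanishing of the martingale term via bounded quadratic variation, the ergodic averaging against the unique stationary density $p$ on $[-\pi/2,\pi/2)$, and the pointwise identities $Q_i=\tfrac12\operatorname{tr}A_i-\tfrac12 h_i'$ and $Q_i'=2(h_i-\gamma_i)$ all check out, and the once-integrated Fokker--Planck relation $h_0p-\tfrac12 h_1(h_1p)'=C$ does reduce the discrepancy between the Furstenberg--Khasminskii integrand and \eqref{eq:lamfir} to a total derivative plus a flux term (up to a sign: the exact derivative is $-\tfrac14(h_1h_1'p)'$, which is immaterial).

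The one place where your argument is genuinely incomplete is the final cancellation in the degenerate case, and you should be aware that this is not a hypothetical corner: in the application of this paper $h_1$ is proportional to $\cos^2\varphi$ (a double zero on the projective circle), the drift does not vanish there, and the flux constant satisfies $C=h_0(\varphi_0)p(\varphi_0)\neq 0$. Consequently the leftover term $-\tfrac{C}{2}\int h_1'/h_1\,d\varphi$ involves a non-integrable singularity of $\tan$-type, $\int h_1'/h_1\,d\varphi$ is not simply $[\log|h_1|]$ over the period, and indeed the integrand of \eqref{eq:lamfir} itself is singular at the zeros of $h_1$ and must be read in a principal-value (or suitably regularised) sense; likewise the boundary behaviour of $(h_1h_1'p)'$ at the degenerate points needs justification. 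You flag this and defer to \cite{IL}, which is acceptable given that the paper defers the entire lemma to the same source, but as a self-contained proof this step is the gap that would need to be closed.
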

For the specific drift structure encountered in this article, the expression \eqref{eq:lamfir} in the statement of Lemma \ref{lem:lyapfir} was refined further in \cite{IL}, and we will use that in the next section.

\section{Asymptotic properties of the model system}\label{asasy}
\subsection{Computing the upper Lyapunov exponents of \eqref{EQ:LINB}}
Note that if $W$ is a Wiener process and $2\sigma_1>0$, then $\tilde{W}(t)=-\sqrt{2\sigma_1} W(t/2\sigma_1)$ is also a Wiener process. Apply this transform, reverse the components of the system and, noting that the diffusion coefficient is nilpotent of degree 2, rewrite \eqref{EQ:LINB} as the Stratonovich SDE
\begin{equation}\label{eq:linBstrat}
\begin{pmatrix}
dB_{\tilde{\varphi}}\\ dB_{\tilde{r}}
\end{pmatrix}
=\begin{pmatrix}
-\frac{\varepsilon}{2\sigma_1} & -\frac{g}{2\sigma_1}\\
-\frac{\delta}{2\sigma_1} & -\frac{\varepsilon}{2\sigma_1}
\end{pmatrix}
\begin{pmatrix}
B_{\tilde{\varphi}}\\ B_{\tilde{r}}
\end{pmatrix}
dt+\begin{pmatrix}
0 & 0\\
1 & 0
\end{pmatrix}
\begin{pmatrix}
B_{\tilde{\varphi}}\\ B_{\tilde{r}}
\end{pmatrix}
\circ d\tilde{W}(t),\quad t\geq 0.
\end{equation}
The transformation from It\^o to Stratonovich (and vice-versa) may be found in \cite{Mao}. Now apply the following result:
\begin{lemma}[Imkeller \& Lederer~\cite{IL}, Theorem 2.1]\label{lem:lemIL}
Consider the 2-dimensional Stratonovich SDE \eqref{eq:main} where $A_0=\begin{pmatrix}a_{11}&a_{12}\\ a_{21}&a_{22}\end{pmatrix}$ is such that $a_{12}\neq 0$ and possesses eigenvalues $\mu_1,\mu_2$ with $\Re \mu_1>\Re \mu_2$. Moreover suppose that $A_1=\begin{pmatrix}\alpha & 0\\ 1 & \alpha\end{pmatrix}$. Then the Lyapunov exponents $\lambda_1,\lambda_2$ satisfy
\[
\lambda_{1/2}=\frac{\mu_1+\mu_2}{2}\pm\frac{1}{2}|a_{12}|\frac{\int_0^\infty\sqrt{v}\exp(-\frac{1}{6}|a_{12}|v^3+\frac{v}{2|a_{12}|}(\mu_1-\mu_2)^2)dv}{\int_0^\infty\frac{1}{\sqrt{v}}\exp(-\frac{1}{6}|a_{12}|v^3+\frac{v}{2|a_{12}|}(\mu_1-\mu_2)^2)dv}.
\]
\end{lemma}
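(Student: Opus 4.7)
The plan is to specialise the Furstenberg-Khasminskii formula of Lemma \ref{lem:lyapfir} to the matrices $(A_0, A_1)$ in the statement and to reduce the resulting angular quadrature on the unit circle to the stated Laplace integral over $[0,\infty)$.

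First I would project onto $S^1$ via $s = (\cos\varphi, \sin\varphi)^T$. Writing $A_1 = \alpha I + N$ with $N = \begin{pmatrix}0 & 0\\ 1 & 0\end{pmatrix}$, the scalar $\alpha I$ acts purely radially and its angular component vanishes identically, so that $h_1(\varphi) = \cos^2\varphi$. A direct calculation for $A_0$ gives
\[
h_0(\varphi) = a_{21}\cos^2\varphi - a_{12}\sin^2\varphi + (a_{22}-a_{11})\sin\varphi\cos\varphi.
\]
The hypothesis $a_{12} \neq 0$ secures H\"ormander's bracket condition on $S^1$ and guarantees a unique stationary density $p$ for \eqref{eq:angmot}.

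Next I would determine $p$ explicitly. Passing to It\^o form (adding $\tfrac{1}{2}h_1 h_1'$ to the drift) and integrating the stationary forward Kolmogorov equation in the chart $v = \tan\varphi : (-\pi/2, \pi/2) \to \mathbb{R}$, the density transforms to a multiple of $\exp\!\bigl(2a_{21}v + (a_{22}-a_{11})v^2 - \tfrac{2}{3}a_{12}v^3\bigr)$. A translation of $v$ completes the cube and eliminates the quadratic term; the identity $(\mu_1-\mu_2)^2 = (a_{11}-a_{22})^2 + 4 a_{12}a_{21}$ then collapses the surviving linear coefficient to $(\mu_1-\mu_2)^2/(2 a_{12})$. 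A final rescaling of the integration variable renormalises the cubic coefficient to $-|a_{12}|/6$; substituting $h_0$, $h_1$ and $p$ into \eqref{eq:lamfir} and reducing to $[0,\infty)$ then produces the $v^{\pm 1/2}$ weights, with the prefactor $\tfrac{1}{2}|a_{12}|$ and the sign $\pm$ labelling the top and bottom Oseledets exponents.

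The principal obstacle will be precise bookkeeping of constants through this chain of substitutions --- the It\^o correction, the $\tan$-chart, the translation completing the cube, and the final rescaling --- so that both the prefactor and the exponent coefficients match the statement exactly. In particular, the reduction from an integral over $\mathbb{R}$ to one over $[0,\infty)$ with half-integer weights, which stems from the factor $(h_1'h_0 - h_0'h_1)/h_1$ evaluated against the transformed density together with the zeros of $h_1$ at $\varphi = \pm\pi/2$, is the step that requires the most care, and also the place where the eigenvalue ordering $\Re \mu_1 > \Re \mu_2$ fixes which branch yields the top exponent.
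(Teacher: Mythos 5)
The paper itself offers no proof of this lemma---it is quoted directly from Imkeller \& Lederer \cite{IL}, Theorem 2.1---so the question is whether your derivation would succeed on its own. Your skeleton (specialising the Furstenberg--Khasminskii formula \eqref{eq:lamfir}, passing to the chart $v=\tan\varphi$, finding the stationary density explicitly) is indeed the route taken in \cite{IL99,IL}, and your computations of $h_1(\varphi)=\cos^2\varphi$, of $h_0$, and of the identity $(\mu_1-\mu_2)^2=(a_{11}-a_{22})^2+4a_{12}a_{21}$ after completing the cube are correct. The genuine gap is the stationary density. In the $\tan$ chart the process solves $dv=[a_{21}+(a_{22}-a_{11})v-a_{12}v^2]\,dt+dW$ (the $\alpha I$ part of $A_1$ cancels, so the noise is additive), and the density you propose, a multiple of $e^{B(v)}$ with $B(v)=2a_{21}v+(a_{22}-a_{11})v^2-\tfrac23 a_{12}v^3$, is the \emph{zero-flux} solution of the stationary forward equation. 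It is not even normalisable (one tail of $e^{B}$ grows like $e^{\frac23|a_{12}||v|^{3}}$), and, more fundamentally, the true stationary measure carries a non-zero probability flux: since $h_1(\pm\pi/2)=0$ while $h_0(\pm\pi/2)=-a_{12}\neq0$, the angle crosses the degenerate directions always in the same sense (equivalently, for $a_{12}>0$ the Riccati variable $v$ reaches $-\infty$ in finite time and re-enters at $+\infty$), so the flux constant cannot be discarded when integrating the Kolmogorov equation. The correct density is $p(v)=2c\,e^{B(v)}\int_{-\infty}^{v}e^{-B(u)}\,du$, a double-integral object, not a single exponential.

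This is not a bookkeeping issue that your ``translation plus rescaling'' can absorb. Completing the cube does yield the linear coefficient $(\mu_1-\mu_2)^2/(2a_{12})$, but the cubic coefficient is then $\tfrac23 a_{12}$, and a rescaling $v\mapsto cv$ multiplies the cubic and linear coefficients by $c^{3}$ and $c$ respectively, so no rescaling reaches $\tfrac16|a_{12}|$ while preserving the linear term. The weights $v^{\pm1/2}$, the coefficient $\tfrac16|a_{12}|$, and the prefactor $\tfrac12|a_{12}|$ in the statement all come precisely from the flux term: writing $p(v)=2c\int_0^\infty e^{B(v)-B(v-w)}\,dw$, the exponent is quadratic in $v$ with leading term $-2a_{12}wv^2$, and the Gaussian integration over $v$ produces $\sqrt{\pi/(2a_{12}w)}\,\exp\bigl(-\tfrac{a_{12}}{6}w^3+\tfrac{(\mu_1-\mu_2)^2}{2a_{12}}w\bigr)$; the factor $w^{-1/2}$ gives the denominator integral, and the analogous computation for the numerator (where the radial functional contributes an additional factor of order $w$) gives the $\sqrt{w}$ weight. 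These half-line integrals do not arise from the zeros of $h_1$ at $\varphi=\pm\pi/2$, as you suggest. To repair the argument, replace the reversible density by the constant-flux density and carry out this Gaussian reduction; with that change the remainder of your outline goes through.
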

\begin{remark}
Note that \cite{IL} use the non-standard matrix index notation
\[
A_0=\begin{pmatrix}a_{11}&a_{21}\\ a_{12}&a_{22}\end{pmatrix},
\]
where $a_{12}$ and $a_{21}$ are reversed. We have switched to standard notation here to avoid confusion. 
\end{remark}

\begin{lemma}\label{lem:rotref1}
The upper Lyapunov exponent of \eqref{EQ:LINB} is given by
\begin{equation}\label{eq:LEint}
\lambda=-\varepsilon+\frac{g}{2}\cdot\frac{\int_0^\infty\sqrt{v}\exp\left(-\frac{1}{12}\frac{g}{\sigma_1}v^3+\frac{\delta }{\sigma_1}v\right)dv}{\int_0^\infty\frac{1}{\sqrt{v}}\exp\left(-\frac{1}{12}\frac{g}{\sigma_1}v^3+\frac{\delta }{\sigma_1}v\right)dv}.
\end{equation}
\end{lemma}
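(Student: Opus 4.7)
The plan is to apply Lemma \ref{lem:lemIL} directly to the Stratonovich SDE \eqref{eq:linBstrat} and then to undo the deterministic time-change in order to recover the upper Lyapunov exponent of the original system \eqref{EQ:LINB}.

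First, I would check that \eqref{eq:linBstrat} really is the Stratonovich form of \eqref{EQ:LINB} after the stated transformations. Since $\Sigma$ in \eqref{EQ:LINB} is nilpotent of index two, the It\^o--Stratonovich correction $\tfrac{1}{2}\Sigma^2\mathbf{B}$ vanishes and the drift carries over unchanged. The time-change $\tau \mapsto t/(2\sigma_1)$ rescales the drift by $1/(2\sigma_1)$ and the diffusion by $1/\sqrt{2\sigma_1}$, and $\tilde W(t) = -\sqrt{2\sigma_1}\, W(t/(2\sigma_1))$ is easily seen to be a Wiener process. The sign flip and the subsequent reversal of the two components are arranged precisely so that the diffusion coefficient in \eqref{eq:linBstrat} takes the form $\begin{pmatrix} 0 & 0 \\ 1 & 0 \end{pmatrix}$ required by Lemma \ref{lem:lemIL} (with $\alpha=0$), while the drift becomes the matrix displayed in \eqref{eq:linBstrat}.

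With \eqref{eq:linBstrat} in hand, I would read off $a_{12} = -g/(2\sigma_1)$, so $|a_{12}| = g/(2\sigma_1) \neq 0$, and compute the eigenvalues of $A_0$: under the standing assumption $g,\delta,\sigma_1>0$ they are real and distinct, namely $\mu_{1,2} = (-\varepsilon \pm \sqrt{g\delta})/(2\sigma_1)$. Thus $\mu_1 + \mu_2 = -\varepsilon/\sigma_1$ and $(\mu_1 - \mu_2)^2 = g\delta/\sigma_1^2$. Substituting these into the formula of Lemma \ref{lem:lemIL} and simplifying the coefficients inside the exponential (the expression $-\tfrac{1}{6}|a_{12}|v^3 + \tfrac{v}{2|a_{12}|}(\mu_1-\mu_2)^2$ collapses to $-\tfrac{g}{12\sigma_1}v^3 + \tfrac{\delta}{\sigma_1}v$) yields the upper Lyapunov exponent $\tilde\lambda$ of \eqref{eq:linBstrat}.

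The final step is to rescale. The component swap is an isometry on $\mathbb{R}^2$, so if $\tilde{\mathbf{B}}$ denotes the solution of \eqref{eq:linBstrat} then $|\tilde{\mathbf{B}}(t)| = |\mathbf{B}(t/(2\sigma_1))|$; substituting $\tau = t/(2\sigma_1)$ into Definition \ref{def:Lyap} gives $\tilde\lambda = \lambda/(2\sigma_1)$, so that the Lyapunov exponent of \eqref{EQ:LINB} is $\lambda = 2\sigma_1 \tilde\lambda$, which multiplies out to exactly \eqref{eq:LEint}. The main obstacle is really just careful bookkeeping: tracking how the composition of the time-change, the sign flip in $\tilde W$, and the coordinate swap acts on both coefficient matrices, and ensuring the corresponding rescaling of the Lyapunov exponent at the end is handled correctly. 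Convergence of the integrals in \eqref{eq:LEint} is automatic since the cubic term in the exponent has a strictly negative coefficient, so no separate integrability argument is needed.
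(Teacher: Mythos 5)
Your proposal is correct and follows essentially the same route as the paper: rewrite \eqref{EQ:LINB} as the time-changed, component-swapped Stratonovich system \eqref{eq:linBstrat}, apply Lemma \ref{lem:lemIL} with $a_{12}=-g/(2\sigma_1)$ and $\mu_{1,2}=(-\varepsilon\pm\sqrt{g\delta})/(2\sigma_1)$, and recover $\lambda$ via the scaling $\lambda=2\sigma_1\tilde\lambda$. Your bookkeeping of the transformations and the simplification of the exponent to $-\tfrac{g}{12\sigma_1}v^3+\tfrac{\delta}{\sigma_1}v$ match \eqref{eq:LEint} exactly, and in fact you spell out the verification of \eqref{eq:linBstrat} in more detail than the paper does.
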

\begin{proof}
Noting that the eigenvalues of the drift coefficient are given by
\[
\mu_1=\frac{-\varepsilon+\sqrt{\delta g}}{2\sigma_1},\qquad \mu_2=\frac{-\varepsilon-\sqrt{\delta g}}{2\sigma_1},
\]
and that the Lyapunov exponents $\lambda_i$ of \eqref{EQ:LINB} and $\tilde\lambda_i$ of \eqref{eq:linBstrat} are related by $\lambda_i=2\sigma_1\tilde\lambda_i$, for $i=1,2$. A direct application of Lemma \ref{lem:lemIL}  to the system \eqref{eq:linBstrat} gives the result.
\end{proof}

\begin{remark} 
The equilibrium solution of \eqref{EQ:LINB} therefore obeys \eqref{eq:asystabe} in Definition \ref{def:asstab} if and only if
\begin{equation}\label{eq:asstacritier}
-\varepsilon+\frac{g}{2}\cdot\frac{\int_0^\infty\sqrt{v}\exp\left(-\frac{1}{12}\frac{g}{\sigma_1}v^3+\frac{\delta }{\sigma_1}v\right)dv}{\int_0^\infty\frac{1}{\sqrt{v}}\exp\left(-\frac{1}{12}\frac{g}{\sigma_1}v^3+\frac{\delta }{\sigma_1}v\right)dv}<0.
\end{equation}
Again following the approach of \cite{IL99, IL}, we provide an expansion in terms of gamma and generalised hypergeometric functions that allows for the efficient computation of the upper Lyapunov exponent.
\end{remark}

\begin{lemma}\label{lem:LEgam}
The Lyapunov exponent given by \eqref{eq:LEint} may be expressed in the following form
\begin{equation}\label{eq:LEgam}
\lambda=-\varepsilon+\sqrt[3]{\frac{3\sigma_1 g^2}{2}}\cdot\frac{\sum_{n=0}^{\infty}\left[\left(\frac{b}{\sqrt[3]{a}}\right)^n\cdot\frac{1}{n!}\cdot\Gamma(\frac{n}{3}+\frac{1}{2})\right]}{\sum_{n=0}^{\infty}\left[\left(\frac{b}{\sqrt[3]{a}}\right)^n\cdot\frac{1}{n!}\cdot\Gamma(\frac{n}{3}+\frac{1}{6})\right]},
\end{equation}
where $a=g/12\sigma_1$ and $b=\delta/\sigma_1$. 
\end{lemma}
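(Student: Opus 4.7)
The plan is to evaluate the numerator and denominator integrals in \eqref{eq:LEint} by expanding the factor $\exp(bv)$ (where $b=\delta/\sigma_1$) as a power series, interchanging summation and integration, and then reducing each term to a gamma function via a cubic substitution. Writing $a=g/(12\sigma_1)$ so that the exponent becomes $-av^3+bv$, the integrals take the common form
\[
I(\alpha)=\int_0^\infty v^\alpha\exp(-av^3+bv)\,dv
\]
with $\alpha=1/2$ for the numerator and $\alpha=-1/2$ for the denominator.

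First I would expand $e^{bv}=\sum_{n\geq 0}(bv)^n/n!$ and, since $b>0$ is fixed and the cubic term $-av^3$ dominates as $v\to\infty$, justify the interchange of sum and integral by monotone/dominated convergence against $\exp(-\tfrac{a}{2}v^3)\cdot\exp(bv-\tfrac{a}{2}v^3)$, the second factor being integrable on $[0,\infty)$. This reduces $I(\alpha)$ to
\[
I(\alpha)=\sum_{n=0}^\infty\frac{b^n}{n!}\int_0^\infty v^{\alpha+n}e^{-av^3}\,dv.
\]
Next I would apply the substitution $u=av^3$ to each inner integral, so that $v=(u/a)^{1/3}$ and $dv=\tfrac{1}{3}a^{-1/3}u^{-2/3}\,du$, yielding the standard identity
\[
\int_0^\infty v^{\alpha+n}e^{-av^3}\,dv=\frac{1}{3\,a^{(\alpha+n+1)/3}}\,\Gamma\!\left(\frac{n}{3}+\frac{\alpha+1}{3}\right).
\]
Specialising $\alpha=\pm 1/2$ gives the shift parameters $1/2$ and $1/6$ appearing in the statement, and the prefactors combine into a common factor of $b/a^{1/3}$ raised to the $n$-th power.

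Finally I would take the ratio $I(1/2)/I(-1/2)$: the explicit $\Gamma$ series produce the quotient in \eqref{eq:LEgam}, while the $a$-dependent prefactors collapse to $a^{-1/3}=(12\sigma_1/g)^{1/3}$. Multiplying by $g/2$ as in \eqref{eq:LEint} and simplifying,
\[
\frac{g}{2}\cdot\left(\frac{12\sigma_1}{g}\right)^{1/3}=\frac{1}{2}\bigl(12\sigma_1 g^2\bigr)^{1/3}=\sqrt[3]{\tfrac{3\sigma_1 g^2}{2}},
\]
yielding the stated formula. The only non-mechanical step is the legitimacy of the term-by-term integration; everything else is a bookkeeping exercise with substitutions and the definition of the gamma function, so I expect no serious obstacle.
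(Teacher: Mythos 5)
Your proposal is correct and follows essentially the same route as the paper: expand the exponential of the linear term $bv$ into a power series, integrate term by term (justified since all terms are nonnegative, so Tonelli/monotone convergence applies), and evaluate each term as a gamma function via the cubic substitution $u=av^3$, with the prefactor $\frac{g}{2}a^{-1/3}$ collapsing to $\sqrt[3]{3\sigma_1 g^2/2}$ exactly as you compute. The only difference is the order of operations --- the paper substitutes $u=av^3$ first and then expands $\exp\bigl(b\sqrt[3]{u/a}\bigr)$, whereas you expand first and substitute in each term --- which produces the identical termwise series, so the arguments coincide.
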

\begin{proof}
First, we treat the numerator of the second term on the right hand side of \eqref{eq:LEint}. Applying the change of variables $u=av^3$ to the numerator of \eqref{eq:asstacritier}, we have
\begin{eqnarray*}
\lefteqn{\int_{0}^{\infty}\sqrt{v}\exp\left(-\frac{g}{12\sigma_1}v^3+\frac{\delta}{\sigma_1}v\right)dv}\\&=&\int_0^{\infty}\sqrt{v}\exp\left(-av^3+bv\right)dv\\&=&\frac{1}{3\sqrt{a}}\int_{0}^{\infty}\frac{1}{\sqrt{u}}\exp\left(b\sqrt[3]{\frac{u}{a}}\right)\exp(-u)du\\
&=&\frac{1}{3\sqrt{a}}\int_{0}^{\infty}\frac{1}{\sqrt{u}}\sum_{n=0}^{\infty}\left[\left(\frac{b}{\sqrt[3]{a}}\right)^n\cdot\frac{1}{n!}\cdot u^{n/3}\right]\exp(-u)du\\
&=&\frac{1}{3\sqrt{a}}\sum_{n=0}^{\infty}\left[\left(\frac{b}{\sqrt[3]{a}}\right)^n\cdot\frac{1}{n!}\cdot\int_{0}^{\infty}u^{n/3-1/2}\exp(-u)du\right]\\
&=&\frac{1}{3\sqrt{a}}\sum_{n=0}^{\infty}\left[\left(\frac{b}{\sqrt[3]{a}}\right)^n\cdot\frac{1}{n!}\cdot\Gamma\left(\frac{n}{3}+\frac{1}{2}\right)\right],
\end{eqnarray*}
where we expand the first exponential term in the integrand around $u=0$ at the third step and invoke Lebesgue's dominated convergence theorem at the fourth step.

The same transformation may be applied to the denominator to get
\[
\int_{0}^{\infty}\frac{1}{\sqrt{v}}\exp\left(-\frac{g}{12\sigma_1}v^3+\frac{\delta}{\sigma_1}v\right)dv=\frac{1}{3a^{1/6}}\sum_{n=0}^{\infty}\left[\left(\frac{b}{\sqrt[3]{a}}\right)^n\cdot\frac{1}{n!}\cdot\Gamma\left(\frac{n}{3}+\frac{1}{6}\right)\right].
\]
Substituting back in to \eqref{eq:LEint} gives the statement of the lemma.
\end{proof}

The form of $\lambda$ given by \eqref{eq:LEgam} in Lemma \ref{lem:LEgam} lends itself to a representation using generalised hypergeometric functions of the form given in the following definition.
\begin{definition}[Generalised hypergeometric functions]\label{def:hypgeodef}
For $k,l\in\mathbb{N}$, $a_1,\ldots,a_k,b_1,\ldots,b_l\in\mathbb{R}\setminus\mathbb{Z}_-$ we define
\begin{equation}\label{eq:hypgeodef}
{}_kF_l(a_1,\ldots,a_k;b_1,\ldots,b_l;x)=\sum_{n=0}^{\infty}\frac{(a_1)_n\cdots (a_k)_n}{(b_1)_n\cdots (b_l)_n}\,\frac{x^n}{n!},\quad x\in I,
\end{equation}
where the Pochammer symbol $(\cdot)_n$ is defined
\[
(a)_n=\prod_{i=0}^{n-1}(a+i),\quad n\geq 0,
\]
or equivalently,
\begin{equation}\label{eq:bra}
(a)_n=\frac{\Gamma(a+n)}{\Gamma(a)},
\end{equation}
so that $(a)_0=1$. $I\subseteq\mathbb{R}$ is the interval, centred at $0$, upon which the right-hand-side of \eqref{eq:hypgeodef} converges. In particular, when $k<l+1$, $I=\mathbb{R}$.
\end{definition}
\begin{lemma}\label{lem:fac}
The following identities hold:
\begin{align}
\frac{n!}{(3n)!}=\frac{1}{3^{2n+1}(\frac{1}{3})_n(\frac{2}{3})_n};\,\,\frac{n!}{(3n+1)!}&=\frac{1}{3^{2n+1}(\frac{2}{3})_n(\frac{4}{3})_n};\nonumber\\ \frac{n!}{(3n+2)!}&=\frac{1}{2}\cdot\frac{1}{3^{2n+1}(\frac{4}{3})_n(\frac{5}{3})_n}.\label{eq:fac}
\end{align}
\end{lemma}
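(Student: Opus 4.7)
The plan is to establish all three identities by a single combinatorial observation: each factorial $(3n+k)!$ for $k\in\{0,1,2\}$ may be split into three interleaved subproducts according to residue modulo $3$, and each subproduct rewrites as a power of $3$ times a shifted Pochhammer symbol.

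First I would record the two elementary building blocks
\[
\prod_{i=0}^{m-1}(3i+r)=3^m\,(r/3)_m,\quad r\in\{1,2\},\qquad \prod_{i=0}^{m-1}(3i+3)=3^m\,m!,
\]
obtained by pulling a factor of $3$ out of each term and using the defining product $(a)_m=\prod_{i=0}^{m-1}(a+i)$. For the first identity, $(3n)!$ decomposes into the three subproducts ($r=1,2,3$) each running over $i=0,\dots,n-1$; assembling them and dividing by the $n!$ contributed by the $r=3$ bracket yields the stated form. For the second identity, $(3n+1)!=(3n+1)\cdot(3n)!$ extends only the $r=1$ bracket to $i=0,\dots,n$, producing $3^{n+1}(1/3)_{n+1}$; the Pochhammer recursion $(1/3)_{n+1}=(1/3)(4/3)_n$ then collapses this into the $(4/3)_n$ appearing in the statement. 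The third identity uses $(3n+2)!=(3n+2)(3n+1)(3n)!$, extending both the $r=1$ and $r=2$ brackets; applying the recursions $(1/3)_{n+1}=(1/3)(4/3)_n$ and $(2/3)_{n+1}=(2/3)(5/3)_n$, the extra factor of $1/2$ visible in the statement traces back to the $2/3$ left over after extracting $3^{n+1}$ from the $r=2$ bracket.

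A more compact equivalent route is to specialise the Gauss multiplication theorem for Pochhammer symbols, which asserts (for fixed $k\in\mathbb{N}$) that $(m)_{kn}$ factors as an explicit power of $k$ times $\prod_{j=0}^{k-1}((m+j)/k)_n$; taking $k=3$ and $m=1,2,3$ in turn, together with the identifications $(1)_{3n}=(3n)!$, $(2)_{3n}=(3n+1)!$ and $(3)_{3n}=(3n+2)!/2$, recovers the three stated identities at a stroke.

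No step is genuinely difficult; the lemma is a careful bookkeeping exercise. The only real risk is tracking the powers of $3$ and the shifted Pochhammer indices correctly, which I would guard against by verifying each formula at $n=0$ and $n=1$ by direct computation before finalising.
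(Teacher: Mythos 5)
Your decomposition is in substance the same one the paper uses (split the factorial by residue classes mod $3$, pull a $3$ out of each factor, and recognise the leftover products as Pochhammer symbols; the Gauss multiplication shortcut is a legitimate packaging of the same fact). The problem is the final matching step: carried out correctly, your building blocks give
\begin{equation*}
(3n)! = 3^{3n}\left(\tfrac{1}{3}\right)_n\left(\tfrac{2}{3}\right)_n\,n!,\qquad
(3n+1)! = 3^{3n}\left(\tfrac{2}{3}\right)_n\left(\tfrac{4}{3}\right)_n\,n!,\qquad
(3n+2)! = 2\cdot 3^{3n}\left(\tfrac{4}{3}\right)_n\left(\tfrac{5}{3}\right)_n\,n!,
\end{equation*}
i.e.\ the constant is $3^{3n}=27^{\,n}$, not the $3^{2n+1}=3\cdot 9^{\,n}$ appearing in \eqref{eq:fac}. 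The two agree only at $n=1$. So your claim that assembling the brackets ``yields the stated form'' is not correct, and the $n=0$ check you promise to perform would have exposed this at once: as printed, the first identity reads $1=\tfrac{1}{3}$ at $n=0$ (similarly $1=\tfrac13$ and $\tfrac12=\tfrac16$ for the other two).

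In other words, your computation does not prove the lemma as stated; it shows that the exponent in the statement should be $3n$. The paper's own proof contains the corresponding slip, writing $\left(\tfrac13\right)_n\left(\tfrac23\right)_n=\frac{(3n)!/3(n!)}{3^{2n}}$ where the divisor should be $3^{n}\,n!$ rather than $3\,n!$; your route (or the multiplication-theorem route, using $(1)_{3n}=(3n)!$, $(2)_{3n}=(3n+1)!$, $(3)_{3n}=(3n+2)!/2$) gives the corrected identities cleanly. Be aware the correction propagates: substituting $27^{\,n}$ instead of $3\cdot 9^{\,n}$ in the proof of Theorem \ref{thm:lamrhohyp} changes the hypergeometric arguments from $A^3/9$ to $A^3/27$. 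The actionable points for your write-up are to actually carry out the $n=0,1$ verification you mention, state the identities with $3^{3n}$, and note the downstream adjustment rather than asserting agreement with the printed form.
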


\begin{proof}
It follows from \eqref{eq:bra} that
\begin{eqnarray*}
\left(\frac{1}{3}\right)_n\left(\frac{2}{3}\right)_n&=&\frac{1\cdot 2\cdot 4\cdot 5\cdot 7\cdot 8\cdots (3n-2)(3n-1)}{3^{2n}}=\frac{(3n)!/3(n!)}{3^{2n}};\\
\left(\frac{2}{3}\right)_n\left(\frac{4}{3}\right)_n&=&\frac{2\cdot 4\cdot 5\cdot 7\cdot 8\cdot 10\cdots (3n-1)(3n+1)}{3^{2n}}=\frac{(3n+1)!/3(n!)}{3^{2n}};\\
\left(\frac{4}{3}\right)_n\left(\frac{5}{3}\right)_n&=&\frac{4\cdot 5\cdot 7\cdot 8\cdot 10\cdot 11\cdots (3n+1)(3n+2)}{3^{2n}}=\frac{(3n)!/3(n!)}{2\cdot 3^{2n}}.
\end{eqnarray*}
\end{proof}

We now represent $\lambda$ in terms of the parameters of the system, using gamma and hypergeometric functions:
\begin{theorem}\label{thm:lamrhohyp}
The Lyapunov exponent given by \eqref{eq:LEint} may be expressed in the following form
\begin{equation}\label{eq:lamrhoComp}
\lambda=-\varepsilon+\sqrt[3]{\frac{3\sigma_1 g^2}{2}}\cdot\frac{G_1(g,\sigma_1,\delta)}{G_2(g,\sigma_1,\delta)},
\end{equation}
where
\begin{multline*}
G_1(g,\sigma_1\delta)=\Gamma\left(\frac{1}{2}\right){}_1F_2\left(\frac{1}{2};\frac{1}{3},\frac{2}{3};\frac{2\delta^3}{9g\sigma_1}\right)+\delta\sqrt[3]{\frac{2}{g\sigma_1}}\Gamma\left(\frac{5}{6}\right){}_1F_2\left(\frac{5}{6};\frac{2}{3},\frac{4}{3};\frac{2\delta^3}{9g\sigma_1}\right)\\+\frac{\delta^2}{12}\sqrt[3]{\frac{4}{g^2\sigma_1^2}}\Gamma\left(\frac{1}{6}\right){}_1F_2\left(\frac{7}{6};\frac{4}{3},\frac{5}{3};\frac{2\delta^3}{9g\sigma_1}\right),
\end{multline*}
and 
\begin{multline}\label{eq:G2}
G_2(g,\sigma_1\delta)=\Gamma\left(\frac{1}{6}\right){}_1F_2\left(\frac{1}{6};\frac{1}{3},\frac{2}{3};\frac{2\delta^3}{9g\sigma_1}\right)+\delta\sqrt[3]{\frac{2}{g\sigma_1}}\Gamma\left(\frac{1}{2}\right){}_1F_2\left(\frac{1}{2};\frac{2}{3},\frac{4}{3};\frac{2\delta^3}{9g\sigma_1}\right)\\+\frac{\delta^2}{12}\sqrt[3]{\frac{4}{g^2\sigma_1^2}}\Gamma\left(\frac{5}{6}\right){}_1F_2\left(\frac{5}{6};\frac{4}{3},\frac{5}{3};\frac{2\delta^3}{9g\sigma_1}\right).
\end{multline}
\end{theorem}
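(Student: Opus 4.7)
The strategy is to start from Lemma~\ref{lem:LEgam}, which already writes $\lambda$ as $-\varepsilon$ plus the cube-root prefactor times a ratio of two power series in the variable $c := b/\sqrt[3]{a}$, with the $n$th term of either series proportional to $c^n\,\Gamma(n/3 + r)/n!$ (where $r = 1/2$ for the numerator and $r = 1/6$ for the denominator). The key observation is that the appearance of $n/3$ inside the $\Gamma$-argument naturally suggests splitting the summation index by residue modulo~$3$: write $n = 3m$, $n = 3m+1$, or $n = 3m+2$ and sum each of the three resulting sub-series over $m\geq 0$. Each sub-series will then be recognisable as a generalised hypergeometric function of type ${}_1F_2$ in the sense of Definition~\ref{def:hypgeodef}, and this yields the three-term decomposition in each of $G_1$ and $G_2$.

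Concretely, for each of the six sub-series we would proceed in three moves. First, use identity \eqref{eq:bra} in the form $\Gamma(m+\rho) = (\rho)_m\,\Gamma(\rho)$ to pull out a constant $\Gamma(\rho)$ and leave a Pochhammer symbol $(\rho)_m$ in $m$, where $\rho$ takes values in $\{1/2,\,5/6,\,7/6\}$ for $G_1$ and $\{1/6,\,1/2,\,5/6\}$ for $G_2$. Second, apply Lemma~\ref{lem:fac} to express $1/(3m)!$, $1/(3m+1)!$, and $1/(3m+2)!$ as $1/m!$ multiplied by the reciprocals of $(1/3)_m(2/3)_m$, $(2/3)_m(4/3)_m$, and $(4/3)_m(5/3)_m$ respectively, together with an overall power of~$3$ (and a factor $1/2$ in the last case). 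Third, read off what remains, namely $\sum_m (\rho)_m/[(\cdot)_m(\cdot)_m]\cdot x^m/m!$, as ${}_1F_2(\rho;\,\cdot,\cdot;\,x)$, where $x$ is a fixed numerical multiple of $c^3 = b^3/a$. After this, substitute $a = g/(12\sigma_1)$ and $b = \delta/\sigma_1$, and invoke $\Gamma(7/6) = (1/6)\Gamma(1/6)$ to rewrite the third term of $G_1$ in the stated form with $\Gamma(1/6)/12$ as its scalar coefficient.

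The main obstacle is purely bookkeeping rather than analytic. Six sub-series contribute in total, each carrying a prefactor assembled from a power of $c$, a numerical factor emerging from Lemma~\ref{lem:fac}, and a Gamma value at one of $1/6,\,1/2,\,5/6,\,7/6$. These prefactors must then combine with the overall coefficient $\sqrt[3]{3\sigma_1 g^2/2}$ in such a way that, after the substitutions for $a$ and $b$, the cube-root constants $\sqrt[3]{2/(g\sigma_1)}$ and $\sqrt[3]{4/(g^2\sigma_1^2)}$ appear in the second and third summands of each $G_i$ and the common argument $2\delta^3/(9g\sigma_1)$ appears inside every ${}_1F_2$. The analytic justification at the single non-trivial step (interchanging sum and integral to pass from the integral representation of Lemma~\ref{lem:LEgam} to the hypergeometric series) is already furnished by the dominated convergence argument used in the proof of Lemma~\ref{lem:LEgam}; no additional convergence work is needed, since $k = 1 < l + 1 = 3$ in Definition~\ref{def:hypgeodef} guarantees that each ${}_1F_2$ is entire in its argument.
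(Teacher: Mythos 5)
Your proposal follows the paper's own proof essentially step for step: split each series from Lemma \ref{lem:LEgam} by the residue of $n$ modulo $3$, use \eqref{eq:bra} to extract $\Gamma(1/2),\Gamma(5/6),\Gamma(7/6)$ (resp.\ $\Gamma(1/6),\Gamma(1/2),\Gamma(5/6)$) and leave Pochhammer symbols, apply Lemma \ref{lem:fac} to the factorials, identify each sub-series as a ${}_1F_2$ via Definition \ref{def:hypgeodef}, and substitute $a=g/12\sigma_1$, $b=\delta/\sigma_1$ (with $\Gamma(7/6)=\tfrac16\Gamma(1/6)$ absorbed into the third coefficient). The convergence remark about the interchange already being handled in Lemma \ref{lem:LEgam} is also consistent with the paper, so the proposal is correct and takes the same route.
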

\begin{proof}
Once again, we proceed by treating the numerator of the second term on the right hand side of \eqref{eq:LEgam}. Setting $A=b/\sqrt[3]{a}$, we can decompose the series as
\begin{multline*}
\sum_{n=0}^{\infty}\left[\left(\frac{b}{\sqrt[3]{a}}\right)^n\cdot\frac{1}{n!}\cdot\Gamma\left(\frac{n}{3}+\frac{1}{2}\right)\right]=\sum_{n=0}^{\infty}\left[A^n\cdot\frac{1}{n!}\cdot\Gamma\left(\frac{n}{3}+\frac{1}{2}\right)\right]\\
=\Gamma\left(\frac{1}{2}\right)\underbrace{\sum_{n=0}^{\infty}\left[\frac{(A^3)^n}{n!}\cdot\frac{n!}{(3n)!}\cdot \left(\frac{1}{2}\right)_n\right]}_{(a)}+A\Gamma\left(\frac{5}{6}\right)\underbrace{\sum_{n=0}^{\infty}\left[\frac{(A^3)^n}{n!}\cdot\frac{n!}{(3n+1)!}\cdot \left(\frac{5}{6}\right)_n\right]}_{(b)}\\+A^2\Gamma\left(\frac{7}{6}\right)\underbrace{\sum_{n=0}^{\infty}\left[\frac{(A^3)^n}{n!}\cdot\frac{n!}{(3n+2)!}\cdot \left(\frac{7}{6}\right)_n\right]}_{(c)}.
\end{multline*}
This decomposition arises due to the fact that, if we select only every third term of the series, the arguments of each gamma function will differ by an integer, and repeated iteration via $\Gamma(n+1)=n\Gamma(n)$ yields a collection of terms that differ from $\Gamma\left(\frac{1}{2}\right)$ only by a multiplicative constant. Taking out the common factor for terms with index 0 mod 3 leaves the sum labelled (a). Similarly, terms of the series with indices 1 (2) mod 3 differ from $A\Gamma\left(\frac{1}{2}\right)$ ($A^2\Gamma\left(\frac{7}{6}\right)$) by multiplicative constants which are gathered together to form (b) ((c)).

Now substitute \eqref{eq:fac} from Lemma \ref{lem:fac} to get
\begin{multline*}
\sum_{n=0}^{\infty}\left[A^n\cdot\frac{1}{n!}\cdot\Gamma\left(\frac{n}{3}+\frac{1}{2}\right)\right]\\=3^{-1}\Gamma\left(\frac{1}{2}\right)\sum_{n=0}^{\infty}\frac{(A^3/9)^n}{n!}\frac{(\frac{1}{2})_n}{(\frac{1}{3})_n(\frac{2}{3})_n}+\frac{A}{3}\Gamma\left(\frac{5}{6}\right)\sum_{n=0}^{\infty}\frac{(A^3/9)^n}{n!}\frac{(\frac{5}{6})_n}{(\frac{2}{3})_n(\frac{4}{3})_n}\\+\frac{A^2}{36}\Gamma\left(\frac{1}{6}\right)\sum_{n=0}^{\infty}\frac{(A^3/9)^n}{n!}\frac{(\frac{7}{6})_n}{(\frac{4}{3})_n(\frac{5}{3})_n}.
\end{multline*}
Applying Definition \ref{def:hypgeodef} yields
\begin{multline*}
\sum_{n=0}^{\infty}\left[A^n\cdot\frac{1}{n!}\cdot\Gamma\left(\frac{n}{3}+\frac{1}{2}\right)\right]\\=3^{-1}\Gamma\left(\frac{1}{2}\right){}_1F_2\left(\frac{1}{2};\frac{1}{3},\frac{2}{3};\frac{A^3}{9}\right)+\frac{A}{3}\Gamma\left(\frac{5}{6}\right){}_1F_2\left(\frac{5}{6};\frac{2}{3},\frac{4}{3};\frac{A^3}{9}\right)\\+\frac{A^2}{36}\Gamma\left(\frac{1}{6}\right){}_1F_2\left(\frac{7}{6};\frac{4}{3},\frac{5}{3};\frac{A^3}{9}\right).
\end{multline*}
A similar computation for the denominator of the second term on the right hand side of \eqref{eq:LEgam} yields
\begin{multline*}
\sum_{n=0}^{\infty}\left[A^n\cdot\frac{1}{n!}\cdot\Gamma\left(\frac{n}{3}+\frac{1}{6}\right)\right]\\=3^{-1}\Gamma\left(\frac{1}{6}\right){}_1F_2\left(\frac{1}{6};\frac{1}{3},\frac{2}{3};\frac{A^3}{9}\right)+\frac{A}{3}\Gamma\left(\frac{1}{2}\right){}_1F_2\left(\frac{1}{2};\frac{2}{3},\frac{4}{3};\frac{A^3}{9}\right)\\+\frac{A^2}{36}\Gamma\left(\frac{5}{6}\right){}_1F_2\left(\frac{5}{6};\frac{4}{3},\frac{5}{3};\frac{A^3}{9}\right).
\end{multline*}
Direct substitution back into \eqref{eq:LEgam} yields the statement of the theorem.
\end{proof}

\subsection{Stochastic stability of the equilibrium of the nonlinear model}
We may now provide conditions for the stability and instability in probability of the zero-equilibrium of \eqref{eq:nonlin}.
\begin{theorem}\label{thm:main}
The equilibrium solution $\mathbf{B}^\ast=\mathbf{0}$ of \eqref{eq:nonlin} is asymptotically stable in probability if $\lambda<0$,  and unstable in probability if $\lambda>0$, where $\lambda$ is the upper Lyapunov exponent of the linearised system \eqref{EQ:LINB} given by \eqref{eq:lamrhoComp} in Theorem \ref{thm:lamrhohyp}. 
\end{theorem}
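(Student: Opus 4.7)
The plan is to reduce Theorem \ref{thm:main} to the Khas'minskii bridge results (Theorems \ref{thm:khas1} and \ref{thm:khas2}), using the sign of the upper Lyapunov exponent to select between the stable and unstable alternatives. First I would verify that the hypothesis of Khas'minskii's theorems---that the nonlinear drift and diffusion coincide with their linearisations to first order at the origin---already holds; this was observed in the discussion preceding Definition \ref{def:asstab}, since $\varphi_\alpha(x)=1+O(x^2)$ and $\varphi_\beta(x)=1+O(x^2)$ make $|f_1(\mathbf{x})-\Lambda\mathbf{x}|$ and $|f_2(\mathbf{x})-\Sigma\mathbf{x}|$ of order $|\mathbf{x}|^3$. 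Theorem \ref{thm:lamrhohyp} already supplies a closed-form expression for $\lambda$, so all that remains is to translate its sign into the correct pathwise asymptotics for \eqref{EQ:LINB}.

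If $\lambda<0$, the remark following Definition \ref{def:Lyap} states that the equilibrium of \eqref{EQ:LINB} is asymptotically stable in the large in the sense of \eqref{eq:asystabe}, so Theorem \ref{thm:khas1} immediately yields asymptotic stability in probability of $\mathbf{B}^\ast=\mathbf{0}$ in \eqref{eq:nonlin}. If $\lambda>0$, the same remark asserts that every solution of \eqref{EQ:LINB} satisfies \eqref{eq:asunst}, and Theorem \ref{thm:khas2} then delivers instability in probability. One bookkeeping point I would make explicit is that Lemma \ref{lem:rotref1} was formulated after transforming \eqref{EQ:LINB} into the Stratonovich form \eqref{eq:linBstrat} using a rescaled Wiener process $\tilde W$; this rescaling multiplies the Lyapunov exponents by the strictly positive factor $2\sigma_1$, so the sign of $\lambda$ is preserved and the hypotheses of Theorems \ref{thm:khas1}--\ref{thm:khas2} applied to \eqref{EQ:LINB} are equivalent to the corresponding statements for \eqref{eq:linBstrat}.

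The main obstacle I expect is upgrading ``$\lambda>0$ for the top Lyapunov exponent'' to the assertion that \emph{every} deterministic initial condition yields a solution growing to infinity almost surely, as required by \eqref{eq:asunst}. In dimension two the Multiplicative Ergodic Theorem (see \cite{ArnoldBook}) produces a random Oseledets splitting $\mathbb{R}^2=E_1(\omega)\oplus E_2(\omega)$ along whose subspaces the two Lyapunov exponents are realised; a fixed deterministic initial vector lies in the exceptional slow subspace $E_2(\omega)$ only on a null set provided the projected angular process \eqref{eq:angmot} admits a unique, non-degenerate stationary law on $S^0$. This non-degeneracy is precisely the H\"ormander hypoellipticity assertion already invoked in the excerpt, and for \eqref{eq:linBstrat} it reduces to a short Lie-bracket computation showing that the drift and diffusion vector fields projected onto $S^0$ generate the full tangent bundle whenever $\delta,g>0$, which is assumed throughout. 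With that point handled, Theorem \ref{thm:main} follows by direct application of Theorems \ref{thm:khas1} and \ref{thm:khas2}.
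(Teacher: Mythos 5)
Your proposal is correct and follows essentially the same route as the paper, which proves Theorem \ref{thm:main} by direct appeal to Theorems \ref{thm:khas1} and \ref{thm:khas2} together with the remark after Definition \ref{def:Lyap} linking the sign of $\lambda$ to asymptotic stability in the large and to \eqref{eq:asunst}. Your additional checks (the linearisation condition, sign preservation under the $2\sigma_1$ time rescaling, and the Oseledets/hypoellipticity point behind the ``all solutions diverge'' claim) merely make explicit what the paper leaves implicit.
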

\begin{proof}
The result is a consequence of Theorems \ref{thm:khas1} and \ref{thm:khas2}.
\end{proof}

\section{Visualising the asymptotic properties of the model system}\label{visualisation}
The forms of $\lambda$ and $\rho$ given in the statement of Theorem \ref{thm:lamrhohyp}, and $\tilde\alpha(\mathcal{S})$ in the statement of Theorem \ref{thm:expMS}, lend themselves to the use of mathematical software in order to visualise regions of stability. We use the {\tt GAMMA()} and {\tt hypergeom()} functions provided by Maple to generate plots of regions of positive and negative $\lambda$, where $\lambda$ is the upper Lyapunov exponent given by \eqref{eq:lamrhoComp} in Theorem \ref{thm:lamrhohyp}. We then superimpose the exponential mean-square stability regions over the regions of positive and negative $\lambda$ and compare the response to changes in the non-normality of the unperturbed system (as measured by \eqref{eq:Hnn} in Definition \ref{def:Hnn}).

In Figure \ref{fig0} we provide regions corresponding to positive and negative Lyapunov exponents in a subset of the $(\varepsilon,\sigma_1)$-plane, and observe the effect of increasing $\text{dep}_F(\Lambda)$ on the computed regions, by varying the value of $\delta\in[0.01,0.04]$ with $g=0.99$. Note that for fixed $\varepsilon,\delta$, increases in the intensity of the perturbation $\sigma_1$ have the effect of moving the Lyapunov exponent from negative to positive, but for fixed $(\varepsilon,\sigma_1)$ increasing the non-normality of the drift will move the sign of the upper Lyapunov exponent from positive back to negative. By Theorem \ref{thm:main}, the unshaded area corresponds to values of $(\varepsilon,\sigma_1)$ where the zero-equilibrium of the nonlinear model system \eqref{eq:nonlin} is unstable in probability, and the shaded area corresponds to values of $(\varepsilon,\sigma_1)$ where the zero-equilibrium of the nonlinear model system \eqref{eq:nonlin} is stable in probability: where catastrophic quenching has occurred. The linearised system \eqref{EQ:LINB} is drift-supercritical for all parameter values displayed here.

\begin{figure}
\begin{center}
$\begin{array}{c@{\hspace{0in}}c}
\mbox{\bf\small (a)}:\,\delta=0.04,\,\text{dep}_F(\Lambda)=0.95 & \mbox{\bf\small (b)}:\,\delta=0.03,\,\text{dep}_F(\Lambda)=0.96 \\
{\includegraphics[width=75mm,height=75mm]{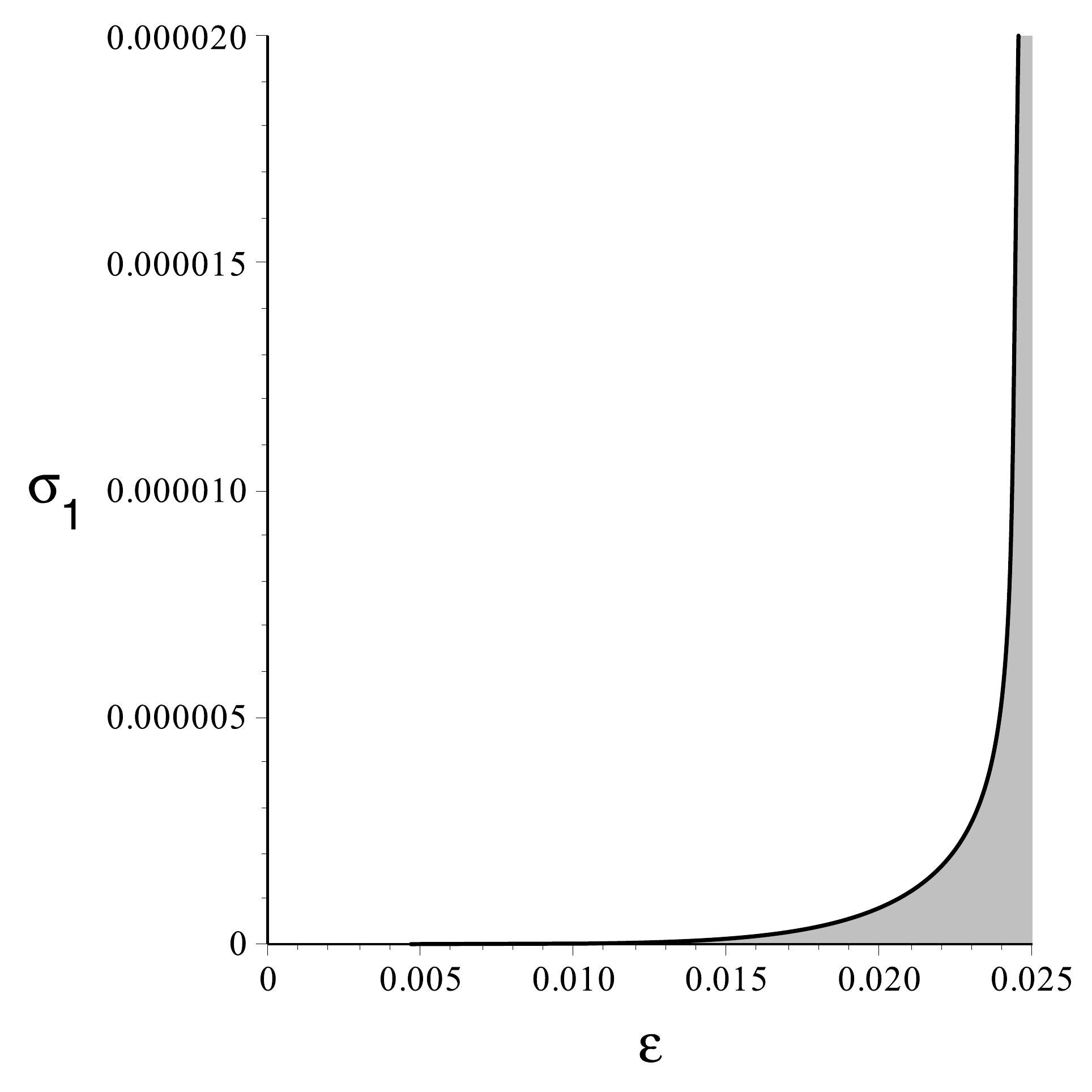}} & {\includegraphics[width=75mm,height=75mm]{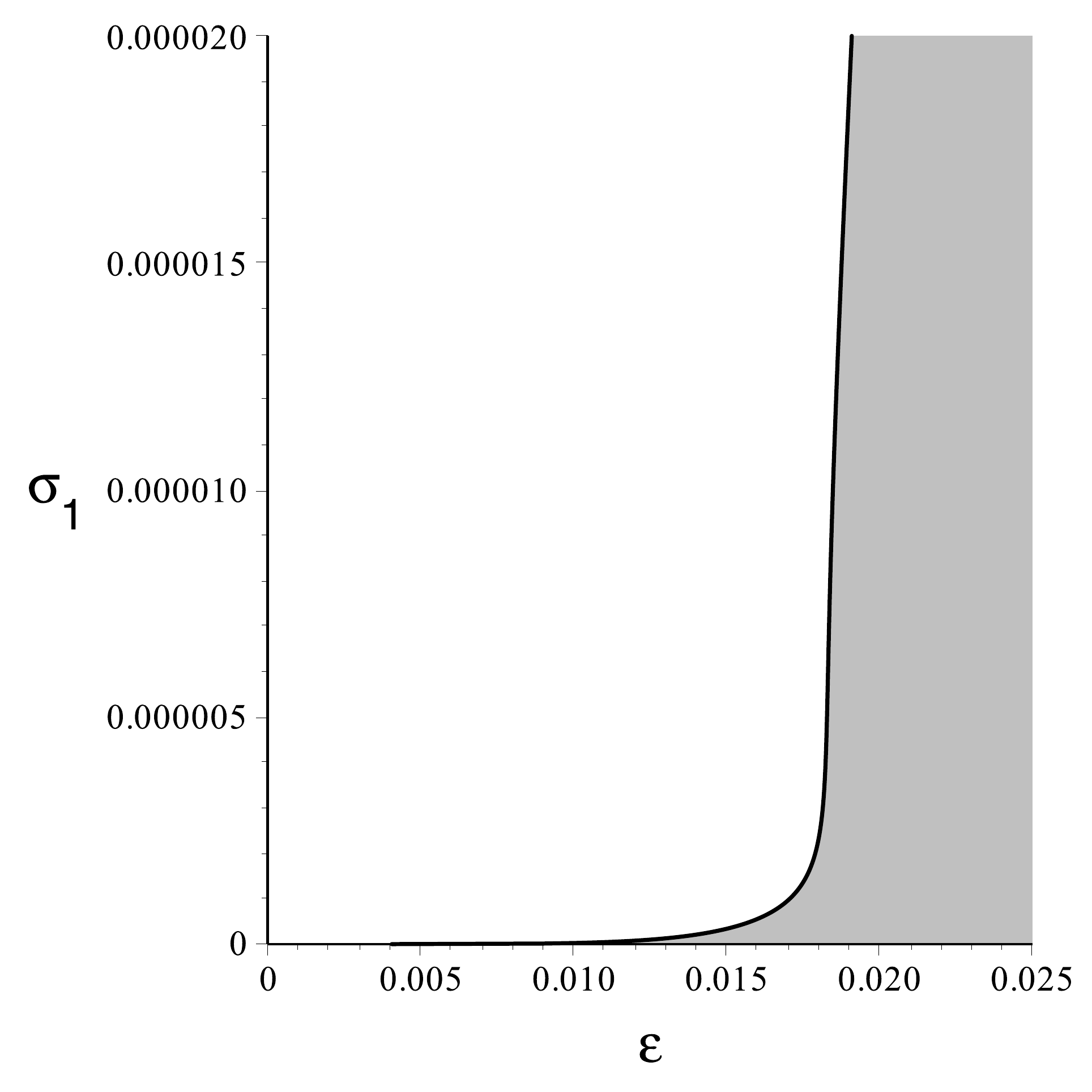}}\\ 
\mbox{\bf\small (c)}:\,\delta=0.02,\,\text{dep}_F(\Lambda)=0.97 & \mbox{\bf\small (d)}:\,\delta=0.01,\, \text{dep}_F(\Lambda)=0.98 \\
{\includegraphics[width=75mm,height=75mm]{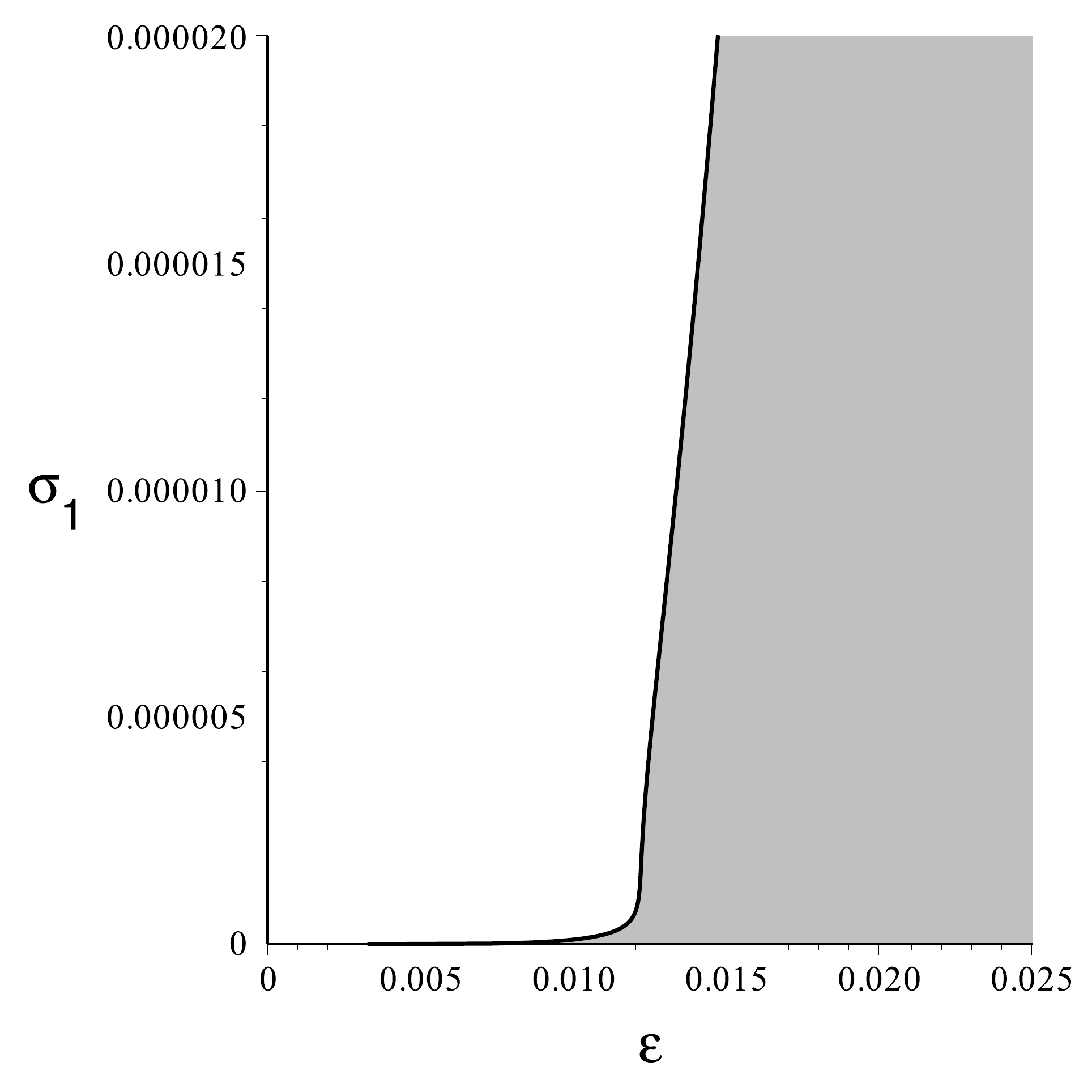}} & {\includegraphics[width=75mm,height=75mm]{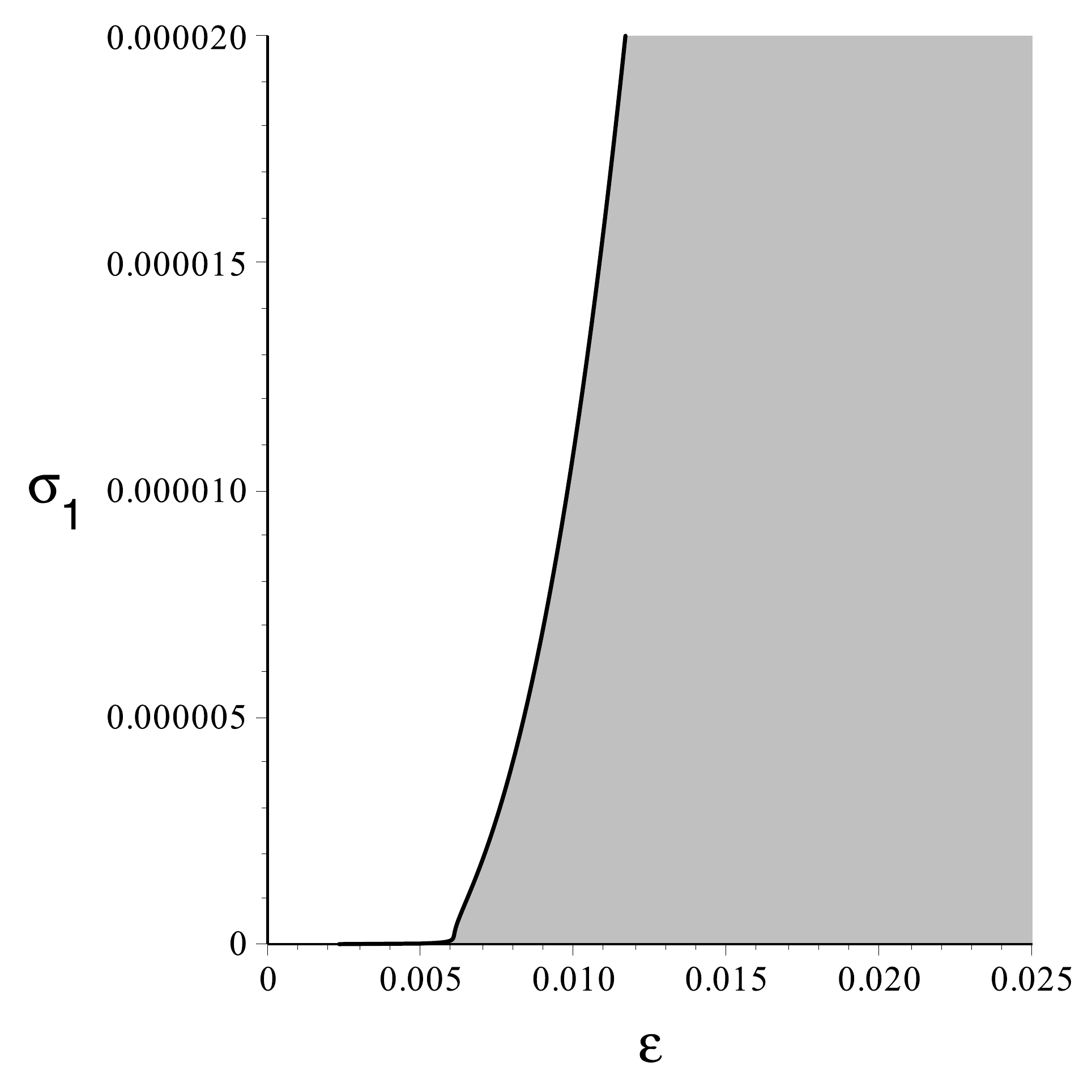}}
\end{array}$
\end{center}
\caption{The effect of varying the non-normality of the drift of \eqref{EQ:LINB} on the sign of the Lyapunov exponent $\lambda$ in the drift-supercritical regime ($g\delta>\varepsilon^2$). The solid line marks the boundary between regions where $\lambda>0$ (unshaded) and $\lambda<0$ (shaded). In each case, $g=0.99$.
}\label{fig0}
\end{figure}

In Figure \ref{fig2} we compare regions of exponential mean-square stability regions to regions of positive and negative $\lambda$ in a larger subset of the $(\varepsilon,\sigma_1)$-plane which includes both subcritical and supercritical regimes for the unperturbed equation. Note that, although all regions depend on $\delta$, the exponential mean-square stability region is more sensitive to changes in $\delta$ across this range of values.  By Theorem \ref{thm:main}, the zero-equilibrium of the nonlinear model system is stable in probability for parameter values in the shaded area, and unstable for parameter values in the unshaded area. Notice that a transition to instability occurs within the range $[0.01,0.1]$ on the horizontal axis, which was presented in \cite{Fed} as the typical range of values of $\varepsilon$ for this model. 

\begin{figure}
\begin{center}
$\begin{array}{c@{\hspace{0in}}c}
\mbox{\bf\small (a)}:\,\delta=0.04,\,\text{dep}_F(\Lambda)=0.95 & \mbox{\bf\small (b)}:\,\delta=0.03,\,\text{dep}_F(\Lambda)=0.96 \\
{\includegraphics[width=70mm,height=69mm]{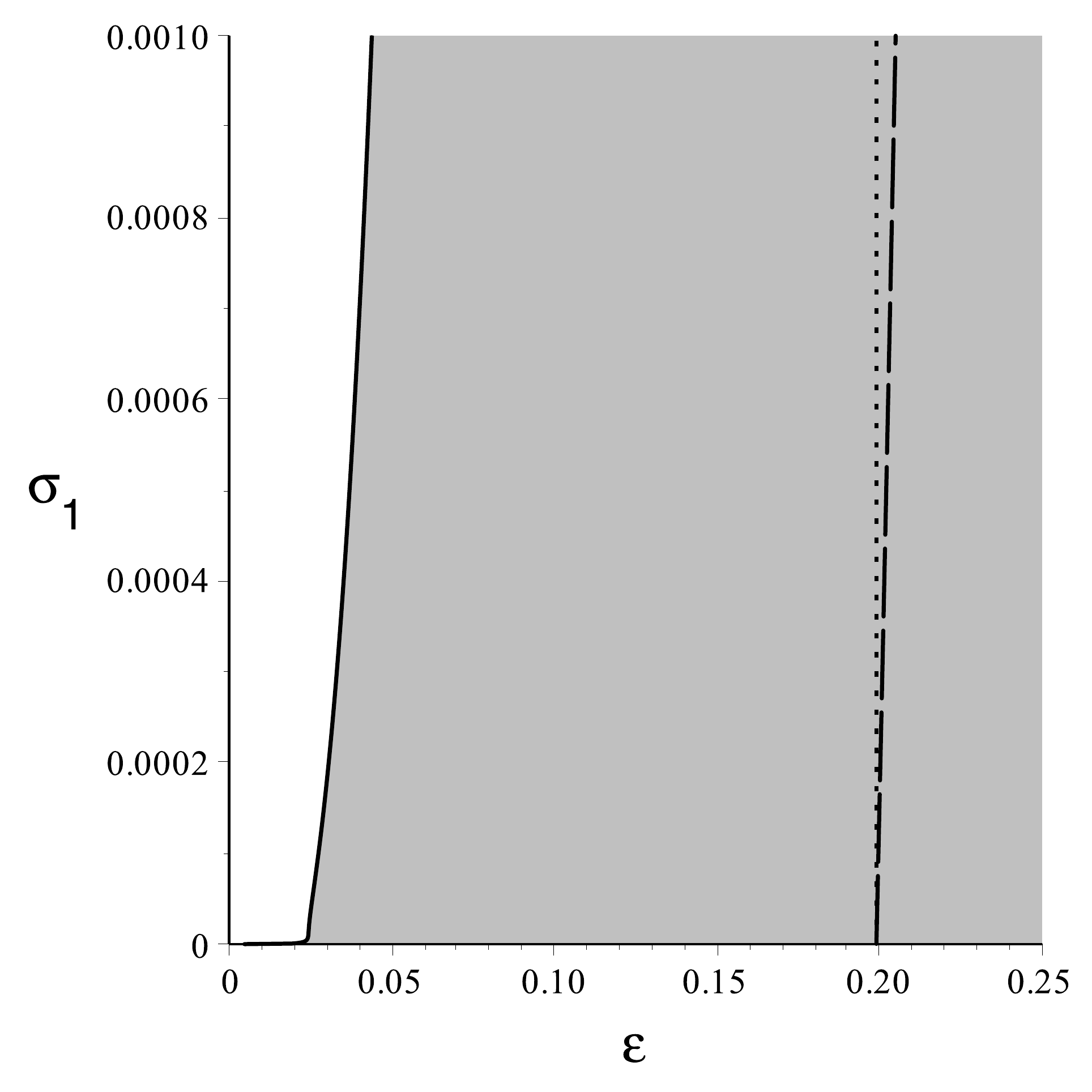}} & {\includegraphics[width=70mm,height=69mm]{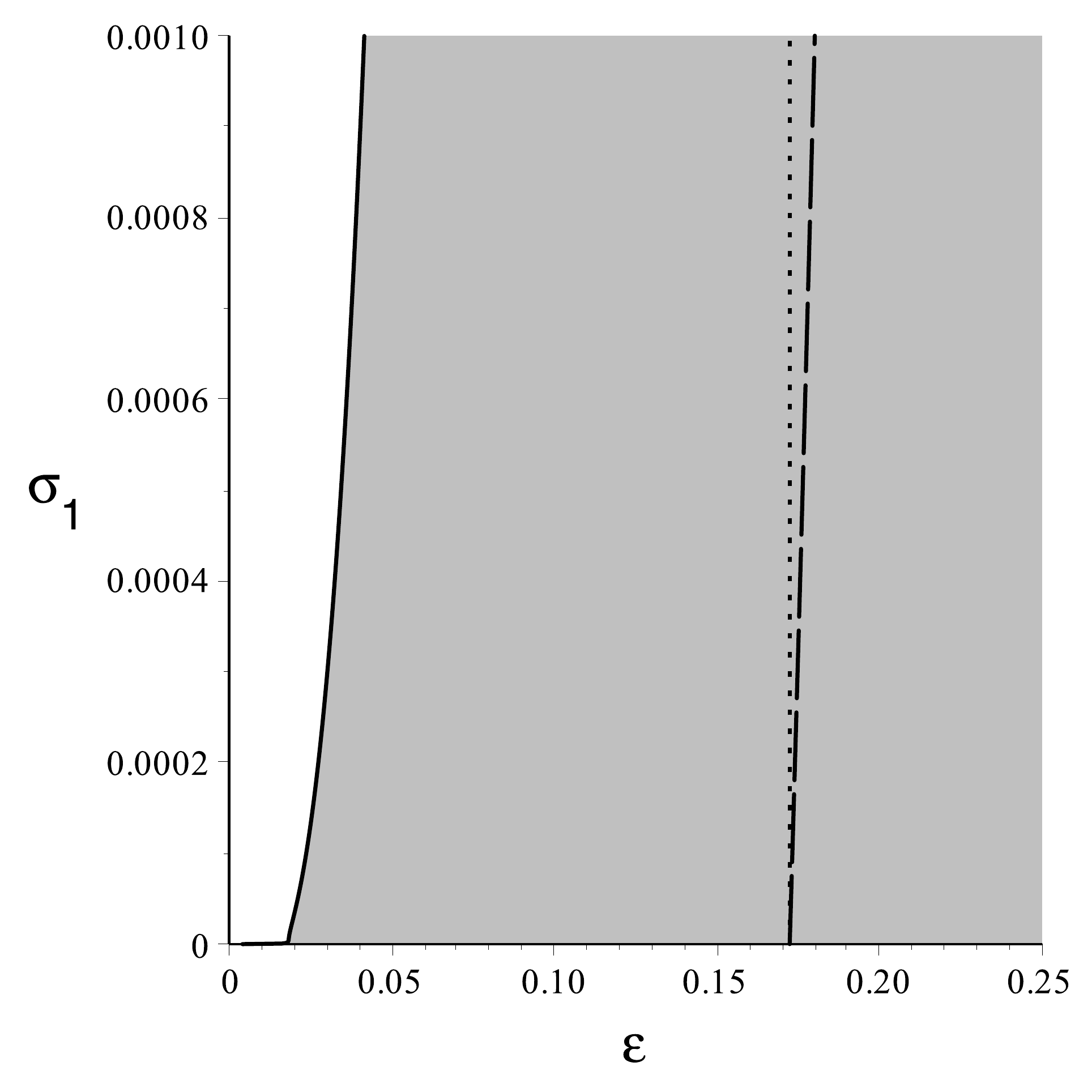}}\\ 
\mbox{\bf\small (c)}:\,\delta=0.02,\,\text{dep}_F(\Lambda)=0.97 & \mbox{\bf\small (d)}:\,\delta=0.01,\, \text{dep}_F(\Lambda)=0.98 \\
{\includegraphics[width=70mm,height=69mm]{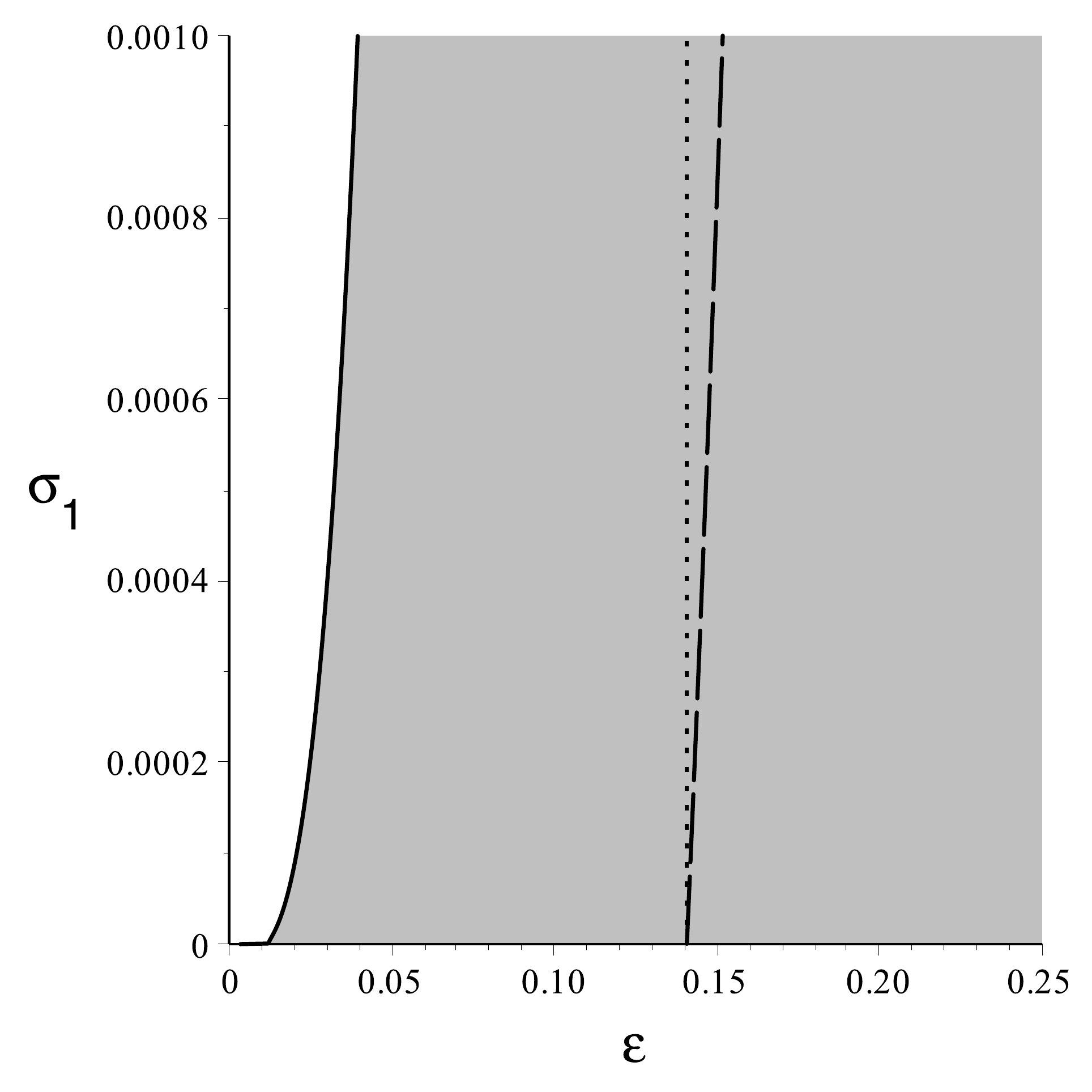}} & {\includegraphics[width=70mm,height=69mm]{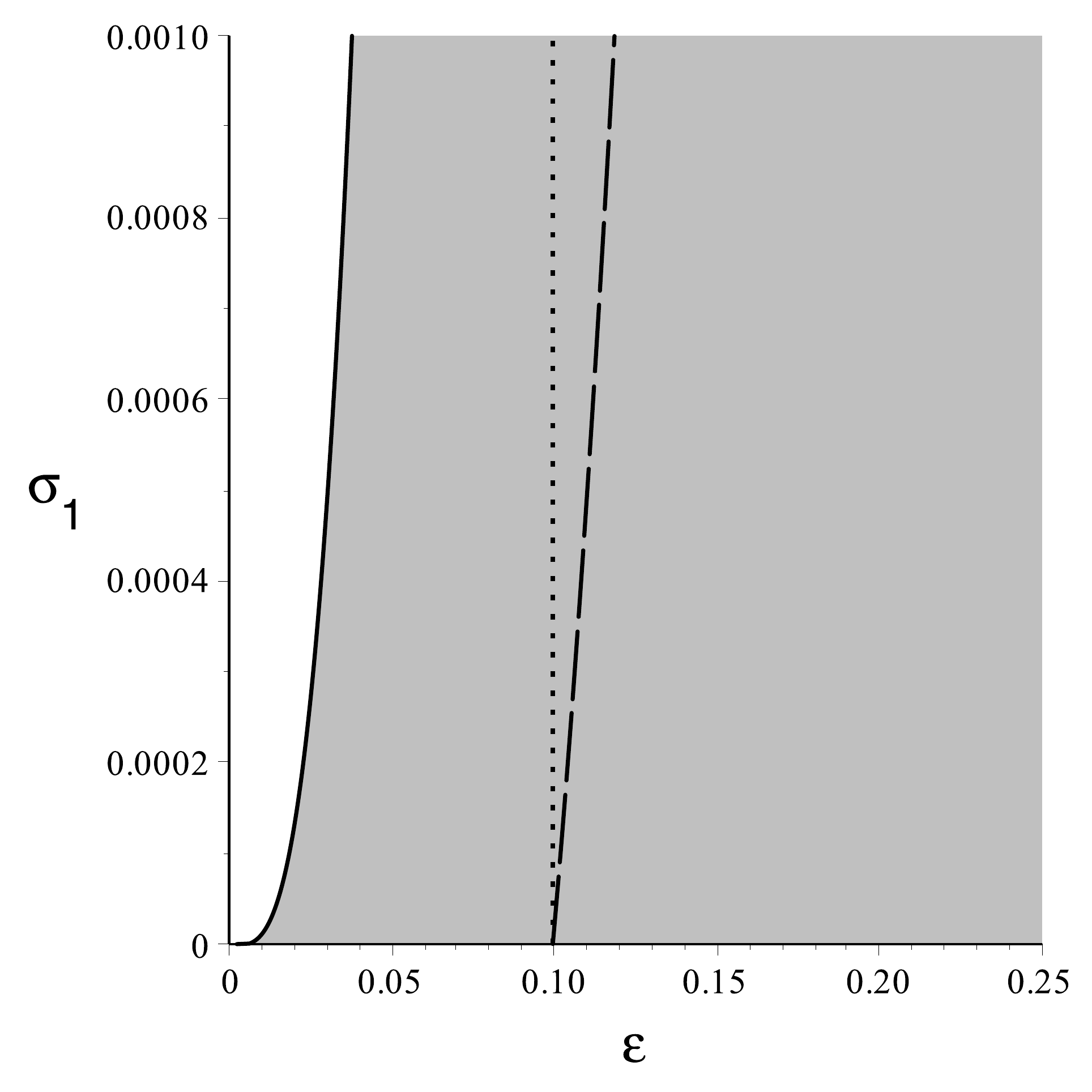}}
\end{array}$
\end{center}
\caption{The effect of varying the non-normality of the drift of \eqref{EQ:LINB} on regions of exponential mean-square stability, and on the sign of $\lambda$. The dotted line marks the boundary between the drift-supercritical regime (to the left) and drift-subcritical regime (to the right). The dashed line marks the boundary between regions of mean-square instability (to the left) and exponential mean-square stability (to the right) of \eqref{EQ:LINB}. The solid line marks the boundary between regions where $\lambda>0$ (unshaded) and $\lambda<0$ (shaded). In each case, $g=0.99$.
}\label{fig2}
\end{figure}

\section{Acknowledgments}
The author was supported by the London Mathematical Society Scheme 2 Grant ref. 21306, and a SQuaRE activity entitled ``Stochastic stabilisation of limit-cycle dynamics in ecology and neuroscience'' funded by the American Institute of Mathematics. 
He also wishes to thank Prof. Sergei Fedotov, The University of Manchester, and Prof. Peter Imkeller, Humboldt Universit\"at zu Berlin, for hosting him in June 2014 while he was on sabbatical, and for their advice and discussion.

\end{document}